\theoremstyle{plain}
\newtheorem{Def}{Definition}[section]
\newtheorem{Thm}{Theorem}[section]
\newtheorem{Prop}{Proposition}[section]
\newtheorem{Lem}{Lemma}[section]
\newtheorem{Cor}{Corollary}[section]
\newtheorem{Rmk}{Remark}[section]
\begin{document}
\title[Lagrangian submanifolds in  bidisks
]{On a family of Lagrangian submanifolds in bidisks and Lagrangian Hofer metric}
\author{YUSUKE MASATANI}

\address{Graduate School of Mathematics, Nagoya University, Nagoya, Japan}

\email{m10039c@math.nagoya-u.ac.jp}


\begin{abstract}
We construct a family of uncountably many Lagrangian submanifolds 
in the standard bidisks such that the Lagrangian Hofer diameter associated to 
each Lagrangian submanifold is unbounded.
 We also prove a certain inequality of the Lagrangian Hofer metric which 
is of the same type as S. Seyfaddini's for the case of the real form of the 
complex $n$-ball.\end{abstract}

\maketitle

\section{Introduction}
For a symplectic manifold $(M,\omega)$, we denote by ${\rm Ham}_c(M, \omega)$ the group of all compactly supported Hamiltonian diffeomorphisms on  $(M,\omega)$. For a Lagrangian submanifold $L$ of $(M,\omega)$, $\mathcal{L}(L)$ denotes the set of Lagrangian submanifolds which are Hamiltonian isotopic to $L$. The {\it Lagrangian Hofer pseudo-metric} $d$ on $\mathcal{L}(L)$ is defined by using the {\it Hofer norm} $\|\cdot \|$, which is introduced in \cite{Ho90}, as follows. 
 
\[d(L_0, L_1) := \inf \{ ~\| \phi \| \mid~ \phi (L_0) = L_1 ,~\phi \in {\rm Ham}_{c}(M, \omega) \}.\]
The Hofer norm $\| \phi \|$ is defined by
 
\[\|\phi\| := \inf \int_{0}^{1}\left( \max_{p\in M}H(t,p)-\min_{p\in M}H(t,p) \right) dt,\]
where the infimum runs over all compactly supported Hamiltonians $H \in C^{\infty}_{c}([0,1]\times M)$ having time-one map $\phi^{1}_{H}$ equal to $\phi$. 

 Chekanov showed in \cite{Ch00} that this pseudo-metric $d$ is non-degenerate  for any closed and connected Lagrangian submanifolds in tame symplectic manifolds. Although our Lagrangian submanifolds are not closed, the same proof as Chekanov's yields that $d$ is also non-degenerate for our cases below. 
 
 In \cite{Kh09}, Khanevsky proved unboundedness of this metric when the ambient space $M$ is an open unit disk $B^{2}:=\{z \in \mathbb{C}\mid |z| <1\} \subset\mathbb{C}$ and the Lagrangian submanifold $L$ is the real form $Re(B^{2}):=\{z\in B^{2} \mid \mathrm{Im}~z =0\}$ of the open unit disk. Seyfaddini generalized Khanevsky's unboundedness result to the case of higher dimensional open unit ball $B^{2n}$ in \cite{Se13}.
  
 In this paper, by adopting Seyfaddini's technique, we prove unboundedness of metric spaces $\mathcal{L}(L)$ for a certain continuous family of non-compact Lagrangian submanifolds in bi-disks, which are mutually non-Hamiltonian isotopic.

 \subsection{Main Result}
Let $B^{2}( r ) \subset {\mathbb C}$ be the open disk of radius $r > 0$ equipped with a symplectic structure $2 \omega_{0}$, where $\omega_{0}$ is the standard symplectic structure on $\mathbb{C}$ so that ${\rm vol}(D(r))=2\pi r^2$. We denote by $B^{2}$ the open unit disk $B^{2}(1)$. We put $(B^{2} \times B^{2} , \bar{\omega}_{0}) := (B^{2}(1) \times B^{2}(1), 2\omega_{0} \oplus 2\omega_{0})$ and define Lagrangian submanifolds $L_{\delta}$ by \[L_\delta := T_\delta \times Re(B^{2}) \subset B^{2} \times B^{2}\] for each $1 / 2 < \delta \le 1$. Here \[T_\delta := \{{| z_1 |}^2 = 1 /(2 \delta) \} \subset B^{2}\] and $Re(B^{2})$ is the real form of  $B^{2}$. 

We study the Lagrangian Hofer metric spaces $(\mathcal{L}(L_{\delta}), d)$ in this paper. We obtain the following:
\begin{Thm}\label{main1}
For any $1/2 < \delta \le 1$, $(\mathcal{L}(L_\delta), d)$ has an infinite diameter.
\end{Thm}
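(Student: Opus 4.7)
The plan is to follow the Khanevsky--Seyfaddini blueprint in two stages: first, exploit the Seyfaddini-type inequality announced in the abstract to obtain a lower bound $d(L_\delta, L')\ge I(L')$ for $L' \in \mathcal{L}(L_\delta)$, where $I$ is some area/capacity invariant measuring how $L'$ sits relative to $L_\delta$; second, exhibit an explicit sequence $L_n \in \mathcal{L}(L_\delta)$ on which $I(L_n)$ is unbounded. Once both pieces are in place, the theorem is immediate, so the interesting content is the construction of $L_n$ and the verification that $I(L_n)\to\infty$.

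To build $L_n$, I would use Hamiltonians of product type $H_n(t,z_1,z_2)=h_n(t,z_2)$ that act only on the second factor. The time-one image is then $L_n=T_\delta \times \gamma_n$, where $\gamma_n = \phi^{1}_{h_n}(Re(B^2))$ is a properly embedded arc in $B^{2}$ Hamiltonian-isotopic to $Re(B^2)$. Choosing $h_n$ to produce a long zig-zag arc $\gamma_n$ that crosses $Re(B^2)$ many times --- this is essentially Khanevsky's construction in \cite{Kh09} transplanted to the second factor --- one expects $I(L_n)$ to grow (at least linearly) with $n$. The hypothesis $1/2<\delta\le 1$ ensures that $T_\delta$ encloses symplectic area $\pi/\delta < 2\pi$, so $T_\delta$ is a nontrivial, displaceable circle in $B^{2}$; this is what makes the invariant $I$ on the bidisk sensitive to oscillations of $\gamma_n$ rather than being killed by the circle factor.

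The final step is to combine the two: applying the Seyfaddini-type inequality to $L_n$ gives $d(L_\delta, L_n)\ge I(L_n)\to\infty$, proving $(\mathcal{L}(L_\delta),d)$ has infinite diameter.

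The principal obstacle is the lower bound, not the construction of $L_n$. Producing a Hamiltonian whose time-one map yields $L_n$ with small Hofer norm is straightforward; the difficulty is ruling out the existence of \emph{some other} $\phi\in \mathrm{Ham}_c(B^2\times B^2,\bar\omega_0)$ with $\phi(L_\delta)=L_n$ and small $\|\phi\|$. This is exactly what the Seyfaddini-type inequality accomplishes, presumably through a spectral-invariant or quasi-state argument adapted from the ball $B^{2n}$ to the product $B^2\times B^2$ with the circle factor $T_\delta$. The last piece to verify is that the invariant $I$ really does diverge on the zig-zag family $L_n$, which should reduce to an explicit area computation inside the second disk together with the fact that $T_\delta$ does not bound in $B^2$.
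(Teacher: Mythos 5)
Your outline has the right overall shape (a Hofer--Lipschitz invariant of Lagrangians giving a lower bound for $d$, plus a family on which it diverges), but both halves are left essentially unproved, and the half you do sketch points in the wrong direction. The lower bound is the entire content of the theorem: the paper constructs quasi-morphisms $\mu^{\tau}_{\delta}$ on ${\rm Ham}_{c}(B^{2}\times B^{2})$ by pulling back Fukaya--Oh--Ohta--Ono's Calabi quasi-morphisms on ${\rm Ham}(S^{2}\times S^{2})$ through the conformal embedding $\Theta_{\delta}$, and then proves (Proposition \ref{invariance}) that $\mu^{\tau}_{\delta}(\phi)$ depends on $\phi(L_{\delta})$ only up to a bounded error; that step requires a rescaling trick in the second factor together with the fact that $\Theta_{\delta}(L_{\delta})$ lies in the superheavy torus $S^{1}_{0}\times S^{1}_{eq}$. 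None of this can be imported from ``the Seyfaddini-type inequality announced in the abstract'': that inequality is Theorem \ref{main2}, it is itself a consequence of this machinery, it only produces a lower bound for the specific Lagrangians $\Phi_{\delta}(f)$ rather than an invariant $I(L')$ defined on all of $\mathcal{L}(L_{\delta})$, and it is stated only for $(2+\sqrt{3})/4<\delta\le 1$, whereas Theorem \ref{main1} is claimed for all $1/2<\delta\le 1$. As written, you are assuming the statement you need to prove.

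Second, your unbounded family is mismatched with the invariant actually available here. The quantity $\mu^{\tau}_{\delta}(\phi^{1}_{H})$ is not an area or crossing count in the style of Khanevsky; by Proposition \ref{superheavy} it reads off the value of $H\circ\Theta_{\delta}^{-1}$ on a superheavy torus. Accordingly the paper's unbounded family is produced simply by compactly supported Hamiltonians equal to an arbitrary constant $h$ on $T_{\delta}\times T_{\delta}=\Theta_{\delta}^{-1}(S^{1}_{0}\times S^{1}_{0})$ --- no zig-zags are needed --- while a Hamiltonian $h_{n}(t,z_{2})$ generating a zig-zag arc in the second factor gives no control on the restriction of $H_{n}\circ\Theta_{\delta}^{-1}$ to $S^{1}_{0}\times S^{1}_{0}$, so the divergence of your invariant on the family $L_{n}$ is unsubstantiated. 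Finally, your assertion that $T_{\delta}$ is ``displaceable'' is false and conceptually backwards: for $1/2<\delta\le 1$ the circle $T_{\delta}$ encloses $2\omega_{0}$-area $\pi/\delta\ge\pi$, at least half of ${\rm vol}(B^{2})=2\pi$, and $\theta_{\delta}$ sends it onto the equator $S^{1}_{0}$; the non-displaceability (indeed superheavyness) of $S^{1}_{0}\times S^{1}_{0}$ and $S^{1}_{0}\times S^{1}_{eq}$ is precisely what makes the lower bound work, and if $T_{\delta}$ were displaceable the quasi-morphism argument would collapse.
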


  In addition to unboundedness, we prove the following inequality for a subfamily of $\{L_{\delta}\}$.
  
\begin{Thm} \label{main2}
 For any $(2 + \sqrt{3}) /4 < \delta \le 1$, there exists a map $\Phi_\delta :C^{\infty}_c((0,1)) \to \mathcal{L}(L_\delta)$ such that 
 \[\frac{\|f - g \|_{\infty}- D_{\delta}}{C_{\delta}} \le d(\Phi_\delta (f), \Phi_\delta(g)) \le \|f - g \|,\]
where $C_{\delta}$ and $D_{\delta}$ denote positive constants.
\end{Thm}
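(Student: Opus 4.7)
The plan is to follow Seyfaddini's strategy from \cite{Se13} and adapt it to the product structure of the bidisk. For $f \in C^{\infty}_{c}((0,1))$, I would build $\Phi_\delta(f)$ as the image of $L_\delta$ under the time-one map of an autonomous Hamiltonian $H_f$. Concretely, I would choose a smooth cutoff $\rho \colon [0,1) \to [0,1]$ supported in a thin annulus around $\{r^{2} = 1/(2\delta)\}$ and equal to $1$ on it, fix a diffeomorphism $\psi \colon Re(B^{2}) \to (0,1)$, and pick a compactly supported primitive $F$ of a suitable multiple of $f \circ \psi \cdot \psi'$ on $Re(B^{2})$. Setting
\[H_f(z_1, z_2) := \rho(|z_1|^{2})\, F(x_2), \qquad \Phi_\delta(f) := \phi^{1}_{H_f}(L_\delta),\]
gives an element of $\mathcal{L}(L_\delta)$. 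A direct computation of the Hamiltonian vector field on $L_\delta$ (where $\rho = 1$ and $\rho' = 0$) shows that $\Phi_\delta(f)$ takes the form $T_\delta \times \Gamma_f$, with $\Gamma_f$ a graph over $Re(B^{2})$ whose height is controlled by $f$.

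For the upper bound, note that $H_f$ and $H_g$ depend only on $|z_1|^{2}$ and $x_2$, hence Poisson commute. Therefore $\phi^{1}_{H_g - H_f} = \phi^{1}_{H_g} \circ (\phi^{1}_{H_f})^{-1}$ sends $\Phi_\delta(f)$ to $\Phi_\delta(g)$, and its Hofer norm is bounded by the oscillation of $\rho \cdot (F - G)$, which with the appropriate normalization of $\rho$ and $\psi$ recovers $\|f - g\|$.

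The harder step is the lower bound. Given $\phi \in \mathrm{Ham}_{c}(B^{2} \times B^{2}, \bar{\omega}_{0})$ with $\phi(\Phi_\delta(f)) = \Phi_\delta(g)$, I would pick $x_0 \in (0,1)$ at which $|f(x_0) - g(x_0)|$ is close to $\|f - g\|_{\infty}$, and near a corresponding point $p \in L_\delta$ construct a symplectic embedding of a small polydisk $P \subset B^{2} \times B^{2}$ so that $P$ meets $\Phi_\delta(f)$ while the corresponding neighborhood on $\Phi_\delta(g)$ is at definite symplectic distance. This forces $\phi$ to displace $P$ (or a related test region) from itself, and an energy-capacity inequality, together with Chekanov's bound from \cite{Ch00} used in the proof of Theorem \ref{main1}, yields $\|\phi\| \ge (\|f - g\|_{\infty} - D_\delta)/C_\delta$.

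The main obstacle is the precise production of this test embedding and verification of the forced displacement. The threshold $(2+\sqrt{3})/4 < \delta$ reflects a numerical area constraint on the two regions of $B^{2}$ separated by $T_\delta$: the inner disk has area $\pi/\delta$ and the outer annulus has area $\pi(2\delta - 1)/\delta$, and the inequality guarantees that both are large enough (relative to the displacement one must enforce) to accommodate the test polydisk and its displaced image inside the bidisk. Carrying out this construction in the presence of the circle factor $T_\delta$, which constrains the available symplectic room in a way absent from Seyfaddini's ball setting, is the delicate technical point.
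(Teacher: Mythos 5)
Your upper bound is fine and essentially matches the paper's (the paper also uses Poisson-commuting autonomous Hamiltonians so that $\phi^{-1}_{\tilde g}\phi^1_{\tilde f}=\phi^1_{\tilde f-\tilde g}$ and $d\le\|\tilde f-\tilde g\|=\|f-g\|$). The gap is in the lower bound. An energy--capacity / displacement argument applied to a test polydisk $P\subset B^{2}\times B^{2}$ can only ever produce a bound of the form $\|\phi\|\ge c(P)$, and every such capacity is bounded above by a fixed constant depending on the bidisk (which has finite volume). But the theorem requires $d(\Phi_\delta(f),\Phi_\delta(g))\ge(\|f-g\|_\infty-D_\delta)/C_\delta$ with $\|f-g\|_\infty$ \emph{arbitrarily large}, so no displacement-energy argument with test regions inside $B^{2}\times B^{2}$ can reach it. (You also invoke ``Chekanov's bound used in the proof of Theorem \ref{main1}'', but the paper's proof of Theorem \ref{main1} does not use Chekanov; it uses the quasi-morphism $\mu^\tau_\delta$ via Proposition \ref{estimate_by_quasimor}. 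Chekanov's result only enters for non-degeneracy of $d$.) The paper's actual mechanism is soft rather than capacity-theoretic: it pulls back the Fukaya--Oh--Ohta--Ono family of Calabi quasi-morphisms $\mu^{\tau}$ on ${\rm Ham}(S^{2}\times S^{2})$ through $\Theta_\delta$ to get $\mu^\tau_\delta$ on ${\rm Ham}_c(B^{2}\times B^{2})$; shows $\mu^\tau_\delta$ is Lipschitz in the Hofer norm and changes by at most $D_{\mu^\tau}/(\delta\,{\rm vol})$ on the stabilizer of $L_\delta$ (Propositions \ref{vanishing} and \ref{invariance}); chooses $\tilde f$ to be constant equal to $f(\tau)$ on $\Theta_\delta^{-1}(T_\tau)$ for each superheavy torus $T_\tau$, $\tau\in I_\delta$, so that $\mu^{\tau'}_\delta(\phi^1_{\tilde f-\tilde g})=\|f-g\|_\infty$ for a suitable $\tau'$ (Lemma \ref{lem__norm}); and finally needs the uniform defect bound $D_{\mu^\tau}\le 12$ (Lemma \ref{defect}) to define $D_\delta$.

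Two further inaccuracies in your sketch. First, your $\Phi_\delta(f)=T_\delta\times\Gamma_f$ is not the paper's construction: the paper's Hamiltonian $\tilde f=\Theta_\delta^*\tilde\pi^*f_{B^2}$ is built from the moment map of $S^{2}\times S^{2}$ (in Oakley--Usher's coordinates) precisely so that it is constant on the preimage of each torus $T_\tau$, which is what makes the superheavyness argument (Proposition \ref{superheavy}) applicable; a Hamiltonian of the form $\rho(|z_1|^2)F(x_2)$ is not constant on these sets. Second, the threshold $(2+\sqrt3)/4<\delta$ has nothing to do with an area budget for a test polydisk: it is exactly the condition (Corollary \ref{image_cor}) that the image of $\Theta_\delta$, namely $\{v_1<2\delta-1\}\times\{w_1<2\delta-1\}$, contains the continuous subfamily $\{T_\tau\}_{\tau\in I_\delta}$ of superheavy tori, using ${\rm pr}_i(T_\tau)=\{|v\cdot e_1|\le\sqrt{1-\tau^2}\}$ and $\sqrt{1-(1/2)^2}=\sqrt3/2<2\delta-1$.
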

 
 In this statement, $C^{\infty}_c((0,1))$ denotes the space of compactly supported smooth functions on an open interval $(0,1)$ and the two norms on $C^{\infty}_c((0,1))$ is defined by
\[\|f\|_{\infty} := \max_{x\in(0,1)} |f(x)|,\] and
\[\|f\| := \max_{x\in(0,1)} f(x) -  \min_{x\in(0,1)} f(x).\]
These norms are equivalent. We note that $\|f\|_{\infty}=\|f\|$ for any non-negative functions $f\ge0$.

\begin{Rmk}
\normalfont
\begin{enumerate}[$(1)$]

\item In \cite{Se13}, Seyfaddini proved the same type inequality as in Theorem \ref{main2} for the case of the real form $Re(B^{2n})$. To prove the inequality, he used a family of quasi-morphisms on ${\rm Ham_{c}}(B^{2n})$ which were constructed as pullbacks of the {\it single} Calabi quasi-morphism on ${\rm Ham_{c}}(\mathbb{C}P^{n})$ in \cite{EP03} via the same family of conformally symplectic embeddings in \cite{BEP04}.
\item On the other hand, to prove Theorem \ref{main2}, we use pullbacks of the {\it family} of Calabi quasi-morphisms on  ${\rm Ham_{c}}(S^{2}\times S^{2})$ constructed by Fukaya-Oh-Ohta-Ono in \cite{FOOO11}.
\item As for the condition on $\delta$ in Theorem \ref{main2}, see Remark \ref{delta_condition}.
\end{enumerate}
\end{Rmk}

 \subsection{Acknowledgement}
I am deeply grateful to my supervisor, Professor Hiroshi Ohta, for his support and valuable advice.


\section{Calabi quasi-morphisms on ${\rm Ham}_{c}(B^{2}\times B^{2}, \bar{\omega}_{0})$}
In \cite{BEP04}, Biran-Entov-Polterovich used a family of conformally symplectic embeddings to obtain a continuum of linearly independent Calabi quasi-morphisms on ${\rm Ham}_{c}(B^{n}, \omega_{0})$ as their pullbacks of a quasi-morphisim on ${\rm Ham}(\mathbb{C}P^n, \omega_{FS})$. In \cite{Se13}, Seyfaddini used the same family of conformally symplectic embeddings and constructed a family of quasi-morphisms on ${\rm Ham_{c}}(B^{2n})$ to prove unboundedness of $\mathcal{L}(Re(B^{2n}), d)$. 

 In this section, we also construct quasi-morphisms on ${\rm Ham}_{c}(B^{2}\times B^{2})$ associated with Fukaya-Oh-Ohta-Ono's symplectic quasi-morphisms $\mu^{\mathfrak{b}(\tau)}_{e_{\tau}}$ as in \cite{Se13}.

\subsection{Calabi quasi-morphisms and symplectic quasi-states}\label{q_mor and q_state}

 Entov and Polterovich developed a way to construct {\it Calabi quasi-morphisms} and {\it symplectic quasi-states} for some closed symplectic manifold $(M, \omega)$ in a series of papers \cite{EP03, EP06, EP09}. In this section, we briefly recall several terminologies and a generalization of their construction.

 A {\it quasi-morphism} on a group $G$ is a function $\mu : G \to \mathbb{R}$ which satisfies the following property: there exists a constant $D \ge 0$ such that
\[|\mu(g_{1} g_{2}) -\mu(g_{1})-\mu(g_{2})| \le D~~~~{\rm for~all}~g_{1}, g_{2} \in G.\]
The smallest number of such $D$ is called the {\it defect} of $\mu$ and we denote by $D_{\mu}$. A quasi-morphism $\mu$ is called {\it homogeneous} if $\mu(g^{m})=m\mu(g)$ for all $m\in \mathbb{Z}$.
 
 For any proper open subset $U \subset M$, the subgroup ${\rm Ham}_{U}(M,\omega)$ is defined as the set which consists of all elements $\phi \in  {\rm Ham}(M, \omega)$ generated by a time-dependent Hamiltonian $H_{t} \in C^{\infty}(M)$ supported in $U$. We denote by $\widetilde{\rm Ham}_{U}(M, \omega)$ the universal covering space of ${\rm Ham}_{U}(M, \omega)$. The Calabi morphism $\widetilde{{\rm Cal}}_{U}: \widetilde{\rm Ham}_{U}(M^{2n}, \omega) \to \mathbb {R}$ is defined by
\[\widetilde{{\rm Cal}}_{U} (\tilde{\phi}_{H}):= \int_{0}^{1}dt \int_{M}H_{t} \omega^{n},\]
where ${\phi}^{1}_{H} \in {\rm Ham}_{U}(M, \omega)$ and $\tilde{\phi}_{H}$ is the homotopy class of the Hamiltonian path $\{\phi^{t}_{H}\}_{t\in[0,1]}$ with fixed endpoints. If $\omega$ is exact on $U$, $\widetilde{{\rm Cal}}_{U}$ descends to ${\rm Cal}_{U}: {\rm Ham}_{U}(M, \omega) \to \mathbb {R}$.

 A subset $X \subset M$ is called {\it displaceable} if there exists a $\phi \in {\rm Ham}(M,\omega)$ such that $\phi(X) \cap \overline{X} = \emptyset$.
\begin{Def}[\cite{EP03}]
\normalfont
A function $\mu:  \widetilde{\rm Ham}(M, \omega) \to \mathbb {R}$ is called a homogeneous Calabi quasi-morphism if $\mu$ is homogeneous quasi-morphism and satisfies
\begin{itemize}
 \item (Calabi property) If $\tilde{\phi} \in \widetilde{\rm Ham}_{U}(M, \omega)$ and $U$ is a displaceable open subset of $M$, then
\begin{equation}
\mu(\tilde{\phi}) = \widetilde{{\rm Cal}}_{U}(\tilde{\phi}),
\end{equation}
where we regard $\tilde{\phi}$ as an element in $\widetilde{\rm Ham}(M, \omega)$.
\end{itemize}
\end{Def}
 
 For each non-zero element of quantum (co)homology $a \in QH(M)$, the {\it spectral invariant} $\rho(~\cdot~ ; a) : C^{\infty}([0,1]\times M) \to \mathbb{R}$ is defined in terms of Hamiltonian Floer theory (see \cite{Oh97}, \cite{Sc00}, \cite{Vi92} for the earlier constructions and \cite{Oh05} for the general non-exact case).
 
  In \cite{FOOO11}, Fukaya-Oh-Ohta-Ono deformed spectral invariants and obtained $\rho^{\mathfrak b}(~\cdot~; a)$ by using an even degree cocycle ${\mathfrak b} \in H^{even}(M, \Lambda_{0})$, where $a$ is an element of {\it bulk-deformed quantum cohomology} $QH_{\mathfrak b}(M, \Lambda)$ (see also \cite{Us11} for a similar deformation of spectral invariants). Here coefficient ring $\Lambda_{0}$, which is called {\it universal Novikov ring}, and its  quotient field $\Lambda$ are defined by
\begin{equation*}
\Lambda_0 : = \Biggl \{ \sum_{i = 0}^{\infty} a_i T^{\lambda_i} \biggl |~  a_i \in \mathbb{C}, ~\lambda_i  \in \mathbb{R}_{\ge 0}, ~ \lim_{i \to \infty} \lambda_i = + \infty \Biggr \}~,
\end{equation*}
\begin{equation*}
\Lambda: = \Biggl \{ \sum_{i = 0}^{\infty} a_i T^{\lambda_i} \biggl |~  a_i \in \mathbb{C}, ~\lambda_i  \in \mathbb{R}, ~ \lim_{i \to \infty} \lambda_i = + \infty \Biggr \} \cong \Lambda_{0}[T^{-1}]~.
\end{equation*}

  Every element $\tilde{\phi} \in \widetilde{\rm Ham}(M, \omega)$ is generated by some time-dependent Hamiltonian $H$ which is {\it normalized} in the sense $\int_{M}H_{t}\omega^{n}=0$ for any $t\in[0,1]$. The spectral invariant $\rho^{\mathfrak b}(~\cdot~; a)$ has the homotopy invariance property: if $F, G$ are normalized Hamiltonians and $\tilde{\phi}_{F}=\tilde{\phi}_{G}$, then $\rho^{\mathfrak b}(F;a)=\rho^{\mathfrak b}(G;a)$ (see Theorem 7.7 in \cite{FOOO11}). 
 Hence, the spectral invariant descends to $\rho^{\mathfrak b}(~\cdot~;a):\widetilde{\rm Ham}(M, \omega) \to \mathbb{R}$ as follows:
 \[\rho^{\mathfrak b}(\tilde{\phi}_{H};a) := \rho^{\mathfrak b}(\underline{H};a)~~~~~{\rm~for~any}~H\in C^{\infty}([0,1]\times M),\]
where we denote by $\underline{H}$ the normalization of $H$:
\[\underline{H}_{t}:=H_{t} - \frac{1}{{\rm vol}(M)}\int_{M^{2n}} H_{t} ~\omega^{n},~~~~~{\rm vol}(M):=\int_{M^{2n}}\omega^{n}.\]
 
 By using this (bulk-deformed) spectral invariant $\rho^{\mathfrak b}(~\cdot~; a)$, as in a series of papers \cite{EP03, EP06, EP09}, they constructed a function $\mu^{\mathfrak b}_{e}: \widetilde{\rm Ham}(M, \omega) \to \mathbb{R}$ by
\[\mu^{\mathfrak b}_{e}(\tilde{\phi}):= {\rm vol}(M) \lim_{m \to +\infty}\frac{\rho^{\mathfrak b}({\tilde{\phi}}^{m}; e)}{m},\] 
where  $e \in QH_{\mathfrak b}(M, \Lambda)$ is an idempotent. 

 The following theorem is the generalization of Theorem 3.1 in \cite{EP03}.
\begin{Thm}[Theorem 16.3 in \cite{FOOO11}]\label{Calabi_q_mor}.
Suppose that there exists a ring isomorphism
\[QH_{\mathfrak b}(M, \Lambda) \cong \Lambda \times Q\]
and $e\in QH_{\mathfrak b}(M, \Lambda)$ is the idempotent corresponding to the unit of the first factor of the right hand side. Then the function 
\[\mu^{\mathfrak b}_{e}: \widetilde{\rm Ham}(M, \omega) \to \mathbb{R}\]
is a homogeneous Calabi quasi-morphism.
\end{Thm}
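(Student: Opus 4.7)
The plan is to mimic the original Entov--Polterovich proof of their Theorem 3.1 in \cite{EP03}, but replacing the usual spectral invariants by the bulk-deformed ones $\rho^{\mathfrak b}$. The three required properties of $\mu^{\mathfrak b}_{e}$---homogeneity, the quasi-morphism defect bound, and the Calabi property on displaceable subsets---will each be extracted from a corresponding axiom of $\rho^{\mathfrak b}(\,\cdot\,;a)$ already established in \cite{FOOO11}. Homogeneity of $\mu^{\mathfrak b}_{e}$ is formal: from the definition via the Ces\`aro-type limit over iterates $\tilde\phi^{m}$, one sees that $\mu^{\mathfrak b}_{e}(\tilde\phi^{k})=k\mu^{\mathfrak b}_{e}(\tilde\phi)$ for every integer $k$, once the limit is known to exist.

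For the quasi-morphism defect, the key inputs are the triangle inequality $\rho^{\mathfrak b}(\tilde\phi\tilde\psi; a\cdot b)\le \rho^{\mathfrak b}(\tilde\phi;a)+\rho^{\mathfrak b}(\tilde\psi;b)$ together with $e\cdot e=e$. Applying this with $a=b=e$ gives subadditivity of $\rho^{\mathfrak b}(\,\cdot\,;e)$, hence of $\mu^{\mathfrak b}_{e}$, and by a standard Fekete argument the defining limit exists. The matching lower bound, which upgrades subadditivity to a genuine quasi-morphism estimate, is where the factor decomposition $QH_{\mathfrak b}(M,\Lambda)\cong\Lambda\times Q$ is essential: because $e$ sits in the field factor $\Lambda$, multiplicative inverses are available, and one derives a Poincar\'e-duality-type inequality of the shape $\rho^{\mathfrak b}(\tilde\phi;e)+\rho^{\mathfrak b}(\tilde\phi^{-1};e)\le C$, uniform in $\tilde\phi$. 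Combining this with the subadditive inequality applied to iterates produces the required two-sided bound $|\mu^{\mathfrak b}_{e}(\tilde\phi\tilde\psi)-\mu^{\mathfrak b}_{e}(\tilde\phi)-\mu^{\mathfrak b}_{e}(\tilde\psi)|\le D$.

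For the Calabi property on a displaceable open set $U$, I would invoke the displacement trick. Pick $\psi\in{\rm Ham}(M,\omega)$ with $\psi(U)\cap\overline U=\emptyset$. For $\tilde\phi\in\widetilde{\rm Ham}_{U}(M,\omega)$ generated by a normalized Hamiltonian $H$ with support in $U$, the conjugate $\psi\tilde\phi^{m}\psi^{-1}$ has support in $\psi(U)$, disjoint from that of $\tilde\phi^{m}$, so the two elements commute and their product is generated by the sum of their Hamiltonians. Exploiting this together with the partial Calabi property of bulk-deformed spectral invariants on Hamiltonians supported in a displaceable set, and averaging over iterates, forces the equality $\mu^{\mathfrak b}_{e}(\tilde\phi)=\widetilde{\rm Cal}_{U}(\tilde\phi)$.

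The main obstacle I anticipate is the lower bound in the second step: extracting the Poincar\'e-duality inequality for $\rho^{\mathfrak b}$ in the bulk-deformed setting. In \cite{EP03} this relies on the field property of a factor of $QH(M)$ together with non-degeneracy of the Floer-theoretic pairing induced by Poincar\'e duality at the chain level; checking that both survive the bulk deformation is delicate, but this is precisely what Fukaya--Oh--Ohta--Ono arrange in their framework. Once their algebraic package is taken as given, the rest of the argument is a clean transcription of the classical Entov--Polterovich proof.
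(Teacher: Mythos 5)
The paper does not prove this theorem: it is imported verbatim as Theorem 16.3 of \cite{FOOO11} and used as a black box, so there is no in-paper argument to compare yours against line by line. That said, your outline is an accurate reconstruction of the strategy that Fukaya--Oh--Ohta--Ono actually follow, which is itself a transcription of Theorem 3.1 of \cite{EP03} with $\rho$ replaced by $\rho^{\mathfrak b}$: homogenization of $\rho^{\mathfrak b}(\,\cdot\,;e)$, the triangle inequality for the bulk-deformed quantum product together with $e\cdot e=e$, the bounded-sum inequality $\rho^{\mathfrak b}(\tilde\phi;e)+\rho^{\mathfrak b}(\tilde\phi^{-1};e)\le C$ coming from the field-factor hypothesis and Poincar\'e duality, and the displacement argument for the Calabi property. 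Two small cautions. First, homogeneity for \emph{negative} exponents is not purely formal from the Ces\`aro limit; $\mu^{\mathfrak b}_{e}(\tilde\phi^{-1})=-\mu^{\mathfrak b}_{e}(\tilde\phi)$ already requires the uniform bound on $\rho^{\mathfrak b}(\tilde\phi^{m};e)+\rho^{\mathfrak b}(\tilde\phi^{-m};e)$, so that step should be ordered after, not before, your second paragraph. Second, as the paper itself flags in a footnote, \cite{FOOO11} uses sign conventions opposite to \cite{EP03} (their spectral invariant is anti-monotone), so the direction of the triangle and duality inequalities must be checked against their conventions rather than copied from Entov--Polterovich. With those adjustments your sketch is the correct roadmap, with the genuinely hard content (existence and properties of $\rho^{\mathfrak b}$, nondegeneracy of the pairing after bulk deformation) correctly identified as the part supplied by the FOOO package.
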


 From standard properties of spectral invariants (Theorem 7.8 in \cite{FOOO11}), $\mu^{\mathfrak b}_{e}$ has two additional properties (Theorem 14.1 in \cite{FOOO11}):

\begin{enumerate}
\item (Lipschitz continuity) There exists a constant $C \ge 0$ such that for any $\tilde\psi, \tilde\phi \in \widetilde{\rm Ham}(M, \omega)$,
\[|\mu^{\mathfrak b}_{e}(\tilde\psi) -\mu^{\mathfrak b}_{e}(\tilde\phi)| \le C \|\tilde\psi \tilde\phi^{-1}\|.\]

\item (Symplectic invariance) For all $\psi \in {\rm Symp}_{0}(M,\omega)$, 
\[\mu^{\mathfrak b}_{e}(\tilde\phi)=\mu^{\mathfrak b}_{e}(\psi \circ \tilde\phi \circ \psi^{-1}).\]
\end{enumerate}
Here $C \le {\rm vol}(M)$ is easily proved as in Proposition 3.5 of \cite{EP03}.

 On the other hand, {\it symplectic quasi-states} are also constructed by using (bulk deformed) spectral invariants. Let $C^{0}(M)$ be the set of continuous functions on $M$.

\begin{Def}[Section 3 in \cite{EP06}]
\normalfont
A functional $\zeta: C^{0}(M) \to \mathbb{R}$ is called symplectic quasi-state if $\zeta$ satisfies the following:
\begin{enumerate}
\item (Normalization) $\zeta(1)=1$.
\item (Monotonicity) $\zeta(F_{1}) \le \zeta(F_{2})$ for any $F_{1}\le F_{2}$.
\item (Homogeneity) $\zeta(\lambda F)=\lambda \zeta(F)$ for any $\lambda \in \mathbb{R}$.
\item (Strong quasi-additivity) If smooth functions $F$ and $G$ are Poisson commutative: $\{F,G\}=0$, then $\zeta(F+G)=\zeta(F)+\zeta(G)$.
\item (Vanishing) If supp$~ F$ is displaceable, then $\zeta(F)=0$.
\item (Symplectic invariance) $\zeta(F)=\zeta(F \circ \psi)$ for any $\psi \in {Symp}_{0}(M,\omega)$.
\end{enumerate}
\end{Def}

 By using the bulk deformed spectral invariant $\rho^{\mathfrak b}(~\cdot~;e)$, a functional $\zeta^{\mathfrak b}_{e}: C^{\infty}(M) \to \mathbb{R}$ is defined by
\[\zeta^{\mathfrak b}_{e}(H):= - \lim_{m\to +\infty} \frac{\rho^{\mathfrak b}(mH;e)}{m}.\]
This functional $\zeta^{\mathfrak b}_{e}$ extends to a functional on $C^{0}(M)$ as follows. We recall the relation between $\zeta^{\mathfrak b}_{e}$ and $\mu^{\mathfrak b}_{e}$ (see Section 14 \cite{FOOO11}). For any $H\in C^{\infty}([0,1]\times M)$, by the {\it shift property} of spectral invariant, we have

\begin{equation}\label{spectral inv shift}
\rho^{\mathfrak b}(\tilde{\phi}_{H};e) =\rho^{\mathfrak b}(H;e) + \frac{1}{{\rm vol}(M)}{\rm Cal}_{M}(H), 
\end{equation}
where ${\rm Cal}_{M}(H)$ is defined by
\[{\rm Cal}_{M}(H) := \int_{0}^{1}dt \int_{M^{2n}} H_{t} ~\omega^{n}.\]
Since $(\tilde{\phi}_{H})^{m} = \tilde{\phi}_{mH}$ for any autonomous Hamiltonian $H$, the following relation is obtained from (\ref{spectral inv shift})
\[\zeta^{\mathfrak b}_{e}(H)= \frac{1}{{\rm vol}(M)} \left( - \mu^{\mathfrak b}_{e}(\tilde\phi_{H}^{1} )+ {\rm Cal}_{M}(H)\right).\]
By the Lipschitz continuity of $\mu^{\mathfrak b}_{e}$, we can extend $\zeta^{\mathfrak b}_{e}$ to a functional on $C^{0}(M)$. From the same argument in Section 6 in \cite{EP06}, this functional $\zeta^{\mathfrak b}_{e}:C^{0}(M) \to \mathbb{R}$ becomes a symplectic quasi-state if one takes an idempotent $e$ from a field factor of $ QH_{\mathfrak b}(M, \Lambda)$ as in Theorem \ref{Calabi_q_mor}.

 In this paper, we define {\it superheavy subsets} as follows.
\begin{Def}
\normalfont
Let $\zeta$ be a symplectic quasi-state on $(M,\omega)$. A closed subset X $\subset M$ is called $\zeta$-superheavy if for all $H \in C^{0}(M)$
\[\min_{X} H \le \zeta(H) \le \max_{X} H . \]
\end{Def}
 It is immediately proved that any $\zeta$-superheavy subsets must intersect each other and non-displaceable (see \cite{EP09} for details).

\subsection{Brief review of FOOO's results}\label{review_FOOO}

In \cite{FOOO12}, Fukaya-Oh-Ohta-Ono computed the full {\it potential function}, which is a ``generating function of open-closed Gromov-Witten invariant'', of some Lagrangian tori in $S^{2}\times S^{2}$ and they proved superheavyness of these tori in \cite{FOOO11}. In this section, we briefly describe the construction of their superheavy tori.

 Let $F_{2}$(0) be a symplectic toric orbifold whose moment polytope $P$ is given by
\[P := \{(u_{1}, u_{2}) \in \mathbb{R}^{2}\mid 0\le u_{1} \le 2,~0\le u_{2}\le 1 - \frac{1}{2}u_{1}\}.\]
We denote by $\pi: F_{2}(0) \to P$ the moment map, and denote by $L( u)$ a Lagrangian torus fiber over an interior point $ u \in {\rm Int}(P)$. Then $F_{2}(0)$ has one singular point which corresponds to the point $(0,1)$ in $P$. They constructed a symplectic manifold ${\hat F}_{2}(0)$ which is symplectomorphic to $(S^{2} \times S^{2}, \frac{1}{2}\omega_{std} \oplus \frac{1}{2} \omega_{std})$, by replacing a neighborhood of the singularity with a cotangent disk bundle of $S^{2}$ (for details, see Section 4  \cite{FOOO12}). Under the smoothing, Lagrangian torus fiber $L(u)$ is sent to a Lagrangian torus in $S^{2} \times S^{2}$. In particular, we denote by $T_{\tau}$ $(0 < \tau \le  \frac{1}{2})$ this torus corresponding to $L((\tau, 1-\tau)) \subset F_{2}(0)$.

 For these Lagrangian tori $T_{\tau} \subset S^{2}\times S^{2}$, they obtained the following.
\begin{Thm}[Fukaya-Oh-Ohta-Ono \cite{FOOO11}] \label{FOOO12result}
For any $0 < \tau \le 1/2$, there exist an element ${\mathfrak b}(\tau) \in  H^{even}(M, \Lambda_{0})$ and idempotents $e_{\tau}$ and $e^{0}_{\tau}$, each of which is an idempotent of a field factor of $QH_{{\mathfrak b}(\tau)}(S^{2}\times S^{2}; \Lambda)$ such that
\begin{enumerate}[$(1)$]
\item $T_{\tau}$ is $\mu^{{\mathfrak b}(\tau)}_{e_{\tau}}$-superheavy and $T_{\frac{1}{2}}$ is $\mu^{{\mathfrak b}(\tau)}_{e^{0}_{\tau}}$-superheavy. 
\item $S^{1}_{eq}\times S^{1}_{eq}$ is $\mu^{{\mathfrak b}(\tau)}_{e}$-superheavy for any idempotent $e$ of a field factor of $QH_{{\mathfrak b}(\tau)}(S^{2}\times S^{2}; \Lambda)$. In particular, 
\[\psi (T_{\tau}) \cap (S^{1}_{eq}\times S^{1}_{eq}) \neq \emptyset\]
 for any symplectic diffeomorphism $\psi$ on $S^{2}\times S^{2}$. 
\end{enumerate}
\end{Thm}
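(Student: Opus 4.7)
The plan is to follow the strategy of Fukaya-Oh-Ohta-Ono: compute the bulk-deformed potential function of the Lagrangian torus fibers $L(u)$ on the toric orbifold $F_2(0)$ and its smoothing $\hat F_2(0) \cong S^2 \times S^2$, choose a bulk class $\mathfrak{b}(\tau)$ so that $T_\tau$ corresponds to a non-degenerate critical point of the bulk-deformed potential whose Jacobi ring contains a field factor, and then invoke the general principle that a Lagrangian torus fiber sitting over such a balanced point is superheavy with respect to the corresponding Calabi quasi-morphism.

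First I would write down the leading-order potential $\mathfrak{PO}(u, y_1, y_2)$ using the Cho--Oh formula: each facet of the moment polytope $P$ contributes a monomial $T^{\ell_i(u)} y^{v_i}$, where $\ell_i$ is the affine distance from $u$ to the facet and $v_i$ is the primitive inward normal. For $F_2(0)$ the polytope has four facets, producing four monomials. Passing to $\hat F_2(0)$ by smoothing the singularity at $(0,1)$ introduces extra holomorphic disks that pass through the exceptional locus; their contribution is captured by adjusting the disk counts via open Gromov--Witten invariants deformed by the bulk cocycle $\mathfrak{b}(\tau) \in H^{even}(S^2\times S^2, \Lambda_0)$. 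Choosing $\mathfrak{b}(\tau)$ as a linear combination of toric divisor classes with coefficients tuned in $\Lambda_0$, I would arrange that the point $(\tau, 1-\tau)$ becomes a critical point of $\mathfrak{PO}^{\mathfrak{b}(\tau)}(u, \cdot)$ in the $y$-variables. By the Kodaira--Spencer-type isomorphism between $QH_{\mathfrak{b}(\tau)}(S^2\times S^2;\Lambda)$ and the Jacobi ring of $\mathfrak{PO}^{\mathfrak{b}(\tau)}$, a non-degenerate critical point yields a field factor, and the corresponding idempotent is $e_\tau$. The torus $T_{1/2}$ similarly sits over another critical point of the same bulk-deformed potential, providing the idempotent $e^0_\tau$.

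Applying FOOO's superheavyness criterion for toric Lagrangian fibers over critical points of the bulk-deformed potential, one obtains $\mu^{\mathfrak{b}(\tau)}_{e_\tau}$-superheavyness of $T_\tau$ and $\mu^{\mathfrak{b}(\tau)}_{e^0_\tau}$-superheavyness of $T_{1/2}$, proving (1). For (2), the torus $S^1_{eq} \times S^1_{eq}$ is the Lagrangian fiber of the standard toric moment map of $S^2 \times S^2$ at the barycenter of the product polytope $[0,1]^2$; by the $\mathbb{Z}/2 \times \mathbb{Z}/2$ toric symmetry, this barycenter is a critical point of every bulk-deformed potential arising from a symmetric bulk cocycle, so the torus is superheavy with respect to every idempotent coming from a field factor of $QH_{\mathfrak{b}(\tau)}(S^2\times S^2;\Lambda)$. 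The intersection statement $\psi(T_\tau) \cap (S^1_{eq} \times S^1_{eq}) \neq \emptyset$ then follows from the fact, recalled in the excerpt, that any two superheavy subsets with respect to the same quasi-state must intersect, together with the symplectic invariance of $\mu^{\mathfrak{b}(\tau)}_{e_\tau}$.

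The hardest step is the bulk-deformed open Gromov--Witten computation, in particular the contribution of holomorphic disks created by smoothing the orbifold singularity, which are not visible from the toric combinatorics and require the full machinery of Kuranishi structures and filtered $A_\infty$-algebras. One must check that $\mathfrak{b}(\tau)$ can be chosen so that the deformed potential acquires critical points at the prescribed fibers in a way compatible with the smoothing procedure. A further subtlety is verifying \emph{non-degeneracy} of each critical point, which is needed to split off a field factor of the Jacobi ring, and which reduces to an explicit Hessian calculation once the leading-order potential is written down.
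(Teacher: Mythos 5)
The paper does not actually prove this statement: it is imported verbatim from Fukaya--Oh--Ohta--Ono (the remark immediately after it locates part (1) as Theorem 23.4(2) and part (2) as Theorem 1.13 of \cite{FOOO11}), so there is no internal proof to compare yours against. Your outline does correctly identify the strategy FOOO use, and it is consistent with the one fragment of that computation this paper reproduces in Section 6, where ${\mathfrak b}(\tau)=T^{\frac12-\tau}\bigl(PD[D_1]+PD[D_2]\bigr)$, the deformed potential $\mathrm{e}^{a}y_1+\mathrm{e}^{-a}y_2+y_1^{-1}T+y_2^{-1}T$, and the four idempotents $e^{\tau}_{\epsilon_1,\epsilon_2}$ are written down explicitly (there only to bound the defect, not to prove superheavyness).

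As a proof, though, your sketch has a genuine gap in part (1). The tori $T_\tau$ are \emph{not} fibers of the standard toric structure on $S^2\times S^2$; they come from the singular Lagrangian fibration induced by the orbifold degeneration $F_2(0)$ and its smoothing $\hat F_2(0)$. The entire substance of FOOO's Theorem 23.4 is transferring the potential-function and critical-point computation across this degeneration: one must show that the bulk-deformed Floer theory of $T_\tau\subset S^2\times S^2$ agrees with that of the orbifold fiber $L((\tau,1-\tau))$, including the non-toric disk contributions created by replacing the singularity with $D^*_1S^2$. You name this step but do not supply it, and it does not reduce to the Cho--Oh formula plus a Hessian computation. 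Two smaller imprecisions: $e_\tau$ and $e^{0}_\tau$ do not arise as two critical points of one and the same potential, but from the potentials of two \emph{different} fibers ($L((\tau,1-\tau))$ and $L((1/2,1/2))$) for the same bulk class, which then pick out distinct field factors of the single ring $QH_{{\mathfrak b}(\tau)}(S^2\times S^2;\Lambda)$; and your symmetry argument for part (2) does not apply as stated, since ${\mathfrak b}(\tau)$ involves only $D_1$ and $D_2$ and is not invariant under the $\mathbb{Z}/2\times\mathbb{Z}/2$ action --- the correct reason is the explicit computation showing that all four critical points of the deformed potential have valuation $(1/2,1/2)$, hence lie over the barycenter. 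Your derivation of the intersection statement from mutual intersection of superheavy sets plus symplectic invariance is fine and matches what the paper records.
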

Here $\mu^{{\mathfrak b}(\tau)}_{e_{\tau}}$ and $\mu^{{\mathfrak b}(\tau)}_{e^{0}_{\tau}}$ denote homogeneous Calabi quasi-morphisms associated to the idempotents $e_{\tau}, e^{0}_{\tau} \in QH_{{\mathfrak b}(\tau)}(S^{2}\times S^{2}; \Lambda)$ respectively (see Theorem \ref{Calabi_q_mor}).

\begin{Rmk}
\normalfont
\begin{enumerate}[$(1)$]
\item In \cite{FOOO11}, $(1)$ is Theorem 23.4 (2), and (2) is Theorem 1.13. 
\item The notion of $\mu^{{\mathfrak b}}_{e}$-superheavy is defined in Definition 18.5 of \cite{FOOO11} and they remark as Remark 18.6 that $\mu^{{\mathfrak b}}_{e}$-superheavyness implies $\zeta^{{\mathfrak b}}_{e}$-superheavyness. In this paper, we need only to use $\zeta^{{\mathfrak b}}_{e}$-superheavyness. 
\item The quasi-morphisms $\mu^{{\mathfrak b}(\tau)}_{e_{\tau}}$ and $\mu^{{\mathfrak b}(\tau)}_{e^{0}_{\tau}}$ descend to homogeneous Calabi quasi-morphisms on ${\rm Ham}(S^{2}\times S^{2})$ as in \cite{EP03}.
\end{enumerate}
\end{Rmk}

Hereafter, we use only above homogeneous Calabi quasi-morphisms \[\mu^{{\mathfrak b} (\tau)}_{e_{\tau}}:{\rm Ham}(S^{2}\times S^{2}) \to \mathbb{R}\] with $0 < \tau < 1/2$ and denote them by $\mu^{\tau}$. 

\subsection{Pullback of the quasi-morphism $\mu^{\tau}$}
To obtain quasi-morphisms on ${\rm Ham}_{c}(B^{2}\times B^{2}, \bar{\omega}_{0})$, we define a conformally symplectic embedding $\Theta_{\delta}: B^{2}\times B^{2} \hookrightarrow S^2\times S^2$ for each Lagrangian submanifold $L_{\delta} \subset B^{2}\times B^{2}$.

 For each $1 / 2 < \delta \le 1$, we define a conformally symplectic embedding $\theta_\delta : (B^{2}, 2  \omega_0) \hookrightarrow (S^2,  \frac{1}{2} \omega_{std}) \cong (\mathbb{C}P^1 , \omega_{FS})$ by
\[\theta_\delta (z) := [ ~\sqrt{ 1- \delta |z|^2} : \sqrt{ \delta} z ~],\]
where we identify the projective space with a unit sphere by using a stereographic projection with respect to $(1, 0, 0) \in S^2 \subset \mathbb{R}^3$ after regarding the plane $\{v=(v_{1}, v_{2}, v_{3}) \in \mathbb{R}^3 \mid v_1 = 0\}$ as the complex plane $\mathbb{C}$. We note that $\theta_{\delta}^{\ast}(\frac{1}{2} \omega_{std}) = \delta \omega_{0}$ and the image of $\theta_\delta$ is $\{v \in S^2 \mid v_1 < 2 \delta -1 \}$. Moreover, by the map $\theta_\delta$, the circle $T_\delta \subset B^{2}$ is mapped onto the equator $S^1_{0} : = \{v \in S^2 \mid v_1 = 0 \}$ and the real form $Re(B^{2})$ is mapped  into the equator $S^1_{eq} := \{v \in \mathbb{R}^{3} \mid v_{3} = 0\} \subset S^2$ . 

 Using this conformally symplectic embedding, we define $\Theta_{\delta}: B^{2}\times B^{2} \hookrightarrow S^2\times S^2$ by 
\begin{equation}\label{Theta_delta}
\Theta_\delta := \theta_\delta \times \theta_\delta:(B^{2}\times B^{2}, \bar{\omega}_{0})  \hookrightarrow (S^2\times S^2, \bar{\omega}_{std})
\end{equation}
where $\bar{\omega}_{std}$ denotes the symplectic structure $\frac{1}{2} \omega_{std} \oplus \frac{1}{2} \omega_{std}$ on $S^{2}\times S^{2}$. This is a conformally symplectic embedding for each $1/2 < \delta \le 1$. Indeed, it is obvious 
\[{\Theta_\delta}^{\ast} \bar{\omega}_{std} = \delta \bar{\omega}_{0}.\]

 For a time-dependent Hamiltonian $F$ on $B^{2} \times B^{2}$, we define a Hamiltonian $F \circ \Theta_\delta^{-1}$ on $S^{2}\times S^{2}$ by
\[F \circ \Theta_\delta^{-1} (x):= 
\begin{cases}
F (t, \Theta_\delta^{-1} (x)) & (x \in {\rm Im}(\Theta_\delta)) \\
0 & (x \notin {\rm Im}(\Theta_\delta)).
\end{cases}\]
Since $\Theta_{\delta}$ is a conformally symplectic embedding, we obtain \[\phi^1_{\delta F\circ \Theta^{-1}_{\delta}}= \Theta_\delta \phi_{F}^1 \Theta_\delta^{-1}.\] Thus, $\Theta_\delta \phi \Theta_\delta^{-1}$ is a Hamiltonian diffeomorphism on $S^{2} \times S^{2}$  for any $\phi \in {\rm Ham}_{c}(B^{2}\times B^{2}, \bar{\omega}_{0})$. 

 We define a family of quasi-morphisms $\mu^{\tau}_{\delta}: {\rm Ham}_{c}(B^{2}\times B^{2}, \bar{\omega}_{0}) \to \mathbb{R}$ by
 
\begin{equation}\label{def_pullbacked_quasimor}
\mu^\tau_\delta(\phi) := \frac{\delta^{-1}}{{\rm vol}(S^2\times S^2)}\left( - \mu^\tau(\Theta_\delta \phi \Theta_\delta^{-1}) +{\rm Cal}_{ \Theta_{\delta}(B^{2}\times B^{2})}(\Theta_\delta \phi \Theta_\delta^{-1})\right),
\end{equation}
where $\mu^{\tau}$ are Fukaya-Oh-Ohta-Ono's quasi-morphisms in Section \ref{review_FOOO} and ${\rm Cal}_{ \Theta_{\delta}(B^{2}\times B^{2})}$ is the Calabi morphism on ${\rm Ham}_{\Theta_{\delta}(B^{2}\times B^{2})}(S^{2}\times S^{2},\bar{\omega}_{std})$ in Section \ref{q_mor and q_state}. The symplectic structure $\bar{\omega}_{std}$ is exact on $\Theta_{\delta}(B^{2}\times B^{2})$, hence the right hand side of (\ref{def_pullbacked_quasimor}) does not depend on the choice of the Hamiltonian generating $\phi$. Moreover, by the definition, it turns out that $\mu^{\tau}_{\delta}$ are quasi-morphisms.

 To obtain another expression of $\mu^\tau_{\delta}$, we define $\zeta^\tau: C^{\infty}([0,1] \times S^{2} \times S^{2}) \to \mathbb{R}$ as the following :
\[\zeta^\tau (H) := - \lim_{n \to \infty} \frac{\rho^{{\mathfrak b}(\tau)} ( H^{\# n}; e_\tau) }{n},\]
where we denote by $H_1 \# H_2$ the concatenation of two Hamiltonian $H_{1}$ and $H_{2}$ :
 \[H_1 \# H_2 (t, x):= 
\begin{cases}
\chi'(t) H_1 (\chi(t), x)) & 0 \le t \le 1/2  \\
\chi'(t-1/2) H_2 (\chi(t), x)) & 1/2 \le t \le 1 
\end{cases}\]
for a smooth function $\chi : [0,1/2] \to [0,1]$ with $\chi ' \ge 0$ and $\chi \equiv 0$ near $t = 0$, $\chi \equiv1$ near $t = 1/2$. Note that this definition is independent of the function $\chi$ since the spectral invariant $\rho^{{\mathfrak b}(\tau)}$ has homotopy invariance property. 

 By the definition and (\ref{spectral inv shift}), one can check that 
\begin{equation}\label{relation_qusimor_qusistate}
\zeta^\tau (H) = \frac{1}{{\rm vol}(S^2\times S^2)} \left( - \mu^\tau (\phi_{H}^1) + {\rm Cal}_{S^2 \times S^2} (H) \right)
\end{equation}
for any time-dependent Hamiltonian $H$ and the restriction of $\zeta^\tau$ to autonomous Hamiltonians corresponds to the bulk-deformed quasi-state $\zeta^{{\mathfrak b}(\tau)}_{e_{\tau}}$ which is associated to $\mu^{\tau} = \mu^{{\mathfrak b}(\tau)}_{e_{\tau}}$.

 Therefore, by (\ref{def_pullbacked_quasimor}) and (\ref{relation_qusimor_qusistate}), we obtain the following expression of $\mu^\tau_{\delta}$.
\begin{Lem}\label{expression_with_qusistate}
\begin{equation*}
\mu^\tau_{\delta} (\phi^1_F) = \delta^{-1} \zeta^\tau (\delta F \circ \Theta_{\delta}^{-1}).
\end{equation*}
\end{Lem}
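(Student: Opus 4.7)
The plan is to unfold both sides of the desired identity using the two formulas already in the text — the definition
\[\mu^\tau_\delta(\phi) = \frac{\delta^{-1}}{{\rm vol}(S^2\times S^2)}\left( - \mu^\tau(\Theta_\delta \phi \Theta_\delta^{-1}) +{\rm Cal}_{ \Theta_{\delta}(B^{2}\times B^{2})}(\Theta_\delta \phi \Theta_\delta^{-1})\right)\]
and the relation
\[\zeta^\tau(H) = \frac{1}{{\rm vol}(S^2\times S^2)}\bigl(-\mu^\tau(\phi^1_H) + {\rm Cal}_{S^2\times S^2}(H)\bigr)\]
established in (2.3) — and then to show that the conjugation $\Theta_\delta\phi^1_F\Theta_\delta^{-1}$ is realized by the Hamiltonian $H := \delta F \circ \Theta_\delta^{-1}$ on $S^2\times S^2$.

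The first step is the conjugation identity $\Theta_\delta\phi^1_F\Theta_\delta^{-1} = \phi^1_{\delta F\circ\Theta_\delta^{-1}}$, which has already been recorded in the excerpt as an immediate consequence of $\Theta_\delta^\ast \bar\omega_{std} = \delta\bar\omega_0$. Plugging $H = \delta F\circ\Theta_\delta^{-1}$ into the formula for $\zeta^\tau$ then gives
\[\delta^{-1}\zeta^\tau(\delta F\circ\Theta_\delta^{-1}) = \frac{\delta^{-1}}{{\rm vol}(S^2\times S^2)}\bigl(-\mu^\tau(\Theta_\delta\phi^1_F\Theta_\delta^{-1}) + {\rm Cal}_{S^2\times S^2}(\delta F\circ\Theta_\delta^{-1})\bigr).\]

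The remaining point is matching the two Calabi terms. Since $\delta F\circ\Theta_\delta^{-1}$ vanishes outside $\Theta_\delta(B^2\times B^2)$, its time-one map lies in ${\rm Ham}_{\Theta_\delta(B^2\times B^2)}(S^2\times S^2,\bar\omega_{std})$, and because $\bar\omega_{std}$ is exact on $\Theta_\delta(B^2\times B^2)$ the Calabi morphism on this subgroup is well-defined and coincides with the integral of the Hamiltonian:
\[{\rm Cal}_{\Theta_\delta(B^2\times B^2)}(\Theta_\delta\phi^1_F\Theta_\delta^{-1}) = \int_0^1\!dt\int_{S^2\times S^2}(\delta F\circ\Theta_\delta^{-1})\,\bar\omega_{std}^{\,2} = {\rm Cal}_{S^2\times S^2}(\delta F\circ\Theta_\delta^{-1}).\]
Substituting this equality into the previous display gives exactly the right-hand side of (2.2), proving the lemma.

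The only non-routine point, and therefore the one to double-check, is this last matching of Calabi terms: one must verify that the restriction-to-support used to define ${\rm Cal}_{\Theta_\delta(B^2\times B^2)}$ on the diffeomorphism side agrees with the integral of the generating Hamiltonian on $S^2\times S^2$. Everything else is bookkeeping of the $\delta$-factors coming from $\Theta_\delta^\ast\bar\omega_{std}=\delta\bar\omega_0$.
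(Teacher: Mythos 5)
Your proof is correct and takes essentially the same route as the paper, which offers no argument beyond citing the definition of $\mu^\tau_\delta$ and the relation between $\zeta^\tau$ and $\mu^\tau$; your unpacking, including the identification of the two Calabi terms via the support condition and the exactness of $\bar\omega_{std}$ on $\Theta_\delta(B^2\times B^2)$, supplies exactly the bookkeeping the paper leaves implicit.
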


\section{Properties of quasi-morphisms $\mu^\tau_\delta$ on ${\rm Ham_{c}}(B^{2}\times B^{2}, \bar{\omega}_{0})$}
In this section, we prove some properties of the quasi-morphisms $\mu^\tau_\delta$ by following procedures in \cite{Se13}. Since Proposition \ref{vanishing} and Proposition \ref{superheavy} are proved by using only standard properties of Calabi quasi-morphisms, two proofs are the same as in \cite{Se13}. However the proof of Proposition \ref{invariance} depends on some properties of Lagrangian submanifolds and ambient spaces, thus we need to modify the proof slightly for our Lagrangian submanifolds $L_{\delta} \subset B^{2}\times B^{2}$.

\begin{Prop}\label{vanishing}
For any  $0 < \tau < 1/2$ and $1/2 < \delta \le 1$, we have
\begin{enumerate}[$(1)$]
\item $|\mu^\tau_\delta(\phi)| \le C_{\delta} \| \phi \|$,  where $C_{\delta}$ is a positive constant.
\item If a time-dependent Hamiltonian $H_{t}$ on $B^{2} \times B^{2}$ is supported in a displaceable subset for any time $t \in [0,1]$ then we have \[\mu^\tau_\delta (\phi^{1}_{H}) = 0 .\]
\end{enumerate}
\end{Prop}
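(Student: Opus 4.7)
The plan is to reduce both statements to the defining formula (\ref{def_pullbacked_quasimor}) for $\mu^\tau_\delta$, combined with the Lipschitz continuity and Calabi property of $\mu^\tau$ recorded in Section \ref{q_mor and q_state}, and the fact that the conformally symplectic embedding $\Theta_\delta$ rescales the Hofer norm by $\delta$. The key observation is that if a compactly supported $F$ generates $\phi \in {\rm Ham}_c(B^2 \times B^2, \bar\omega_0)$, then $\delta F \circ \Theta_\delta^{-1}$ (extended by zero outside ${\rm Im}(\Theta_\delta)$) generates $\Theta_\delta \phi \Theta_\delta^{-1} \in {\rm Ham}(S^2 \times S^2, \bar\omega_{std})$; since $F$ vanishes outside its support in $B^2 \times B^2$, its maximum and minimum are simply rescaled by $\delta$, yielding $\|\Theta_\delta \phi \Theta_\delta^{-1}\| \le \delta \|\phi\|$.

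For (1), I would apply the Lipschitz estimate $|\mu^\tau(\tilde\psi)| \le {\rm vol}(S^2 \times S^2)\,\|\tilde\psi\|$ (which follows from the stated Lipschitz property together with $\mu^\tau({\rm id}) = 0$ and the bound $C \le {\rm vol}(S^2\times S^2)$) and the elementary estimate $|{\rm Cal}_{\Theta_\delta(B^2\times B^2)}(\tilde\psi)| \le {\rm vol}(\Theta_\delta(B^2 \times B^2))\,\|\tilde\psi\|$, both to $\tilde\psi = \Theta_\delta \phi \Theta_\delta^{-1}$. Substituting into (\ref{def_pullbacked_quasimor}), the prefactor $\delta^{-1}$ cancels the $\delta$ produced by the Hofer rescaling, giving
\[
|\mu^\tau_\delta(\phi)| \le \frac{{\rm vol}(S^2 \times S^2) + {\rm vol}(\Theta_\delta(B^2 \times B^2))}{{\rm vol}(S^2 \times S^2)} \cdot \|\phi\|,
\]
so $C_\delta$ can be taken to be this explicit constant (independent of $\tau$).

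For (2), assume $H_t$ is supported in a common displaceable subset $X \subset B^2 \times B^2$ for every $t$. Then $\delta H \circ \Theta_\delta^{-1}$ is supported in $\Theta_\delta(X) \subset S^2\times S^2$, and $\Theta_\delta(X)$ is itself displaceable there: a displacing Hamiltonian diffeomorphism $\psi$ of $B^2 \times B^2$ transports under $\Theta_\delta$ to a Hamiltonian diffeomorphism $\Theta_\delta \psi \Theta_\delta^{-1}$ of $S^2 \times S^2$ (extended by the identity outside ${\rm Im}(\Theta_\delta)$) that displaces $\Theta_\delta(X)$. Since $\bar\omega_{std}$ is exact on $\Theta_\delta(B^2 \times B^2)$, the Calabi property of $\mu^\tau$ (descended from $\widetilde{\rm Ham}$ to ${\rm Ham}$) then yields
\[
\mu^\tau(\Theta_\delta \phi^1_H \Theta_\delta^{-1}) = {\rm Cal}_{\Theta_\delta(X)}(\Theta_\delta \phi^1_H \Theta_\delta^{-1}) = {\rm Cal}_{\Theta_\delta(B^2 \times B^2)}(\Theta_\delta \phi^1_H \Theta_\delta^{-1}),
\]
the second equality because both values compute the same integral of the generating Hamiltonian $\delta H \circ \Theta_\delta^{-1}$. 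Substituting into (\ref{def_pullbacked_quasimor}) cancels the two terms and gives $\mu^\tau_\delta(\phi^1_H) = 0$.

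The argument is essentially formal given the setup and parallels Seyfaddini's treatment in \cite{Se13}. The only point that needs slight care is the transfer of displaceability across $\Theta_\delta$ and the corresponding matching of the two Calabi values, which is precisely the reason the second term in (\ref{def_pullbacked_quasimor}) is built into the definition; I do not anticipate a genuine obstacle.
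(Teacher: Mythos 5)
Your proposal is correct and follows essentially the same route as the paper: part (1) via the Lipschitz property of $\mu^\tau$, the rescaling $\|\Theta_\delta\phi\Theta_\delta^{-1}\|\le\delta\|\phi\|$, and a volume bound on the Calabi term (the paper computes ${\rm Cal}_{\Theta_\delta(B^2\times B^2)}(\Theta_\delta\phi^1_F\Theta_\delta^{-1})=\delta^3\int_0^1\!\!\int F\,\bar\omega_0^2$ and bounds that, which gives the same constant $C_\delta=1+\delta^2$ as yours); part (2) via the Calabi property forcing the two terms in (\ref{def_pullbacked_quasimor}) to cancel. Your extra care about transferring displaceability through $\Theta_\delta$ is exactly the point the paper leaves implicit, so there is nothing to correct.
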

\begin{proof}
Let $\phi_{F}^{1}$ be an element  in ${\rm Ham}_{c}(B^{2} \times B^{2}, {\bar \omega}_{0})$.
Since the quasi-morphisms $\mu^{\tau}$ have Lipschitz continuity property with respect to the Hofer norm on ${\rm Ham}(S^{2} \times S^{2}, {\bar \omega}_{std})$ and $\Theta_\delta \phi_{F}^{1} \Theta_\delta^{-1} = \phi^{1}_{\delta F \circ \Theta_{\delta}^{-1}}$, we obtain 
\[
|\mu^{\tau}(\Theta_\delta \phi_{F}^{1} \Theta_\delta^{-1})|
\le
 {\rm vol}(S^{2}\times S^{2}) \|\phi_{\delta F \circ  \Theta_\delta^{-1}}^{1}\|.
\]
By the definition of the Hofer norm, it turns out that
\[\|\phi_{\delta F \circ  \Theta_\delta^{-1}}^{1}\| \le \delta \|\phi^{1}_{F}\|. \]
Hence, we have
\[
|\mu^{\tau}(\Theta_\delta \phi_{F}^{1} \Theta_\delta^{-1})|
\le
\delta {\rm vol}(S^{2}\times S^{2})  \|\phi^{1}_{F}\|.
\]
On the other hand, an easily calculation shows that 
\[{\rm Cal}_{ \Theta_{\delta}(B^{2}\times B^{2})}(\Theta_\delta \phi^{1}_{F} \Theta_\delta^{-1}) = \delta^{3} \int_{0}^{1} dt \int_{B^{2} \times B^{2}} F (t, x) ~{\bar{\omega}_{0}}^{2}.\]
As a result, we can obtain the following:
\[
|{\rm Cal}_{ \Theta_{\delta}(B^{2}\times B^{2})}(\Theta_\delta \phi^{1}_{F} \Theta_\delta^{-1})|
\le
\delta^{3} {\rm vol}(B^{2}\times B^{2})  \|\phi^{1}_{F}\|.
\]
Consequently, it turns out that
\begin{eqnarray*}
|\mu^\tau_\delta(\phi)| & \le &
 \frac{\delta^{-1}}{{\rm vol}(S^{2}\times S^{2})}\Bigl(|\mu^{\tau}(\Theta_\delta \phi \Theta_\delta^{-1})| + |{\rm Cal}_{ \Theta_{\delta}(B^{2}\times B^{2})}(\Theta_\delta \phi \Theta_\delta^{-1})| \Bigl) \\ & \le &
 (1+\delta^{2})\|\phi\|.
\end{eqnarray*}
Thus (1) is proved.

 The property (2)  follows immediately from Calabi-property of $\mu^{\tau}$. Indeed, two terms in the definition of $\mu^{\tau}_{\delta}$ are canceled each other.
\end{proof}

Let $X \subset S^{2}\times S^{2}$ be a $\zeta^{{\mathfrak b}(\tau)}_{e_{\tau}}$-superheavy subset. By definition, we have 
\[\min_{X} H \le \zeta^{{\mathfrak b}(\tau)}_{e_{\tau}}(H) \le \max_{X} H \]
for all autonomous Hamiltonians $H$ on $S^{2}\times S^{2}$.
One can obtain the same inequality for $\zeta^\tau: C^{\infty}([0,1] \times S^{2} \times S^{2}) \to \mathbb{R}$  if a closed subset $X\subset S^{2}\times S^{2}$ is  $\zeta^{{\mathfrak b}(\tau)}_{e_{\tau}}$-superheavy. More precisely, for all time-dependent Hamiltonians $H$ on $S^{2}\times S^{2}$, we have
\begin{equation}\label{ineq_asymptotic_quasistate}
\min_{[0,1] \times X} H \le \zeta^{\tau}(H) \le \max_{[0,1]\times X} H . 
\end{equation}
This is easily proved  as mentioned in \cite{Se13} without the detail. Indeed, we can take two autonomous Hamiltonians $H_{\min}, H_{\max}$ for any time-dependent Hamiltonian $H$ such that $H_{\min} \equiv \min_{[0,1] \times X} H$, $H_{\max} \equiv \max_{[0,1] \times X} H$ on $X$ and $H_{\min}\le H \le H_{\max}$ on $S^{2}\times S^{2}$. By applying  the anti\footnote{Fukaya-Oh-Ohta-Ono used different sign conventions from \cite{EP03, EP06, EP09} (see Remark 4.17 in \cite{FOOO11}). }-monotonicity property of $\rho^{{\mathfrak b}(\tau)}$ (i.e. $H \le K \Rightarrow \rho^{{\mathfrak b}(\tau)}(H; e_{\tau}) \ge \rho^{{\mathfrak b}(\tau)}(K; e_{\tau})$, see Theorem 9.1 in \cite{FOOO11}) and the fact $H \le K$ implies $H^{\# n} \le K^{\# n}$ to above Hamiltonians $H_{\min}, H, H_{\max}$, we can obtain (\ref{ineq_asymptotic_quasistate}) immediately.

 From Lemma \ref{expression_with_qusistate} and this inequality (\ref{ineq_asymptotic_quasistate}), we obtain the following.

\begin{Prop}\label{superheavy}
Suppose a closed subset $X \subset S^2 \times S^2$ is $\zeta^{{\mathfrak b}(\tau)}_{e_{\tau}}$-superheavy and $F$ is any compactly supported time-dependent Hamiltonian on the bi-disks $B^{2} \times B^{2}$ such that  $F \circ \Theta_\delta^{-1} \mid_{X} \equiv c$, then 
\[\mu^\tau_\delta (\phi^1_F) =c .\]
\end{Prop}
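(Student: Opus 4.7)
The proof is a direct assembly of Lemma \ref{expression_with_qusistate} with the inequality (\ref{ineq_asymptotic_quasistate}), so there is no substantial obstacle; the plan is to carry out that assembly carefully.

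First, I would invoke Lemma \ref{expression_with_qusistate} to rewrite the quantity of interest as
\[
\mu^\tau_\delta(\phi^1_F) = \delta^{-1}\, \zeta^\tau\!\bigl(\delta F \circ \Theta_\delta^{-1}\bigr),
\]
and set $H := \delta F \circ \Theta_\delta^{-1}$, viewed as a compactly supported time-dependent Hamiltonian on $S^2\times S^2$ (note that the extension-by-zero convention outside $\mathrm{Im}(\Theta_\delta)$ makes $H$ a well-defined smooth function on $[0,1]\times S^2\times S^2$ because $F$ is compactly supported inside $B^2\times B^2$).

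Next, I would translate the hypothesis $F\circ \Theta_\delta^{-1}\mid_X \equiv c$ into a statement about $H$: since multiplication by the constant $\delta$ preserves the level-set condition and since $c$ is independent of $t$, one has
\[
\min_{[0,1]\times X} H \;=\; \max_{[0,1]\times X} H \;=\; \delta c.
\]
Then, using the assumption that $X$ is $\zeta^{\mathfrak b(\tau)}_{e_\tau}$-superheavy, I would apply the inequality (\ref{ineq_asymptotic_quasistate}) (which extends superheavyness from autonomous to time-dependent Hamiltonians, as explained in the paragraph preceding the proposition) to obtain
\[
\delta c \;\le\; \zeta^\tau(H) \;\le\; \delta c,
\]
whence $\zeta^\tau(H)=\delta c$.

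Finally, substituting back into the expression from Lemma \ref{expression_with_qusistate} gives
\[
\mu^\tau_\delta(\phi^1_F) \;=\; \delta^{-1} \cdot \delta c \;=\; c,
\]
which is the desired conclusion. The only point requiring a small amount of care is verifying that the hypothesis of the proposition is compatible with the extension-by-zero definition of $F\circ\Theta_\delta^{-1}$ on the complement of $\mathrm{Im}(\Theta_\delta)$ (in particular, if $X$ meets that complement, then necessarily $c=0$), but this is automatic from the stated assumption and does not affect the argument.
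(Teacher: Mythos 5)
Your proposal is correct and follows exactly the route the paper intends: the paper gives no separate proof of this proposition, deriving it directly from Lemma \ref{expression_with_qusistate} together with the inequality (\ref{ineq_asymptotic_quasistate}), which is precisely the assembly you carry out. Your additional remark about the extension-by-zero convention (forcing $c=0$ when $X$ meets the complement of $\mathrm{Im}(\Theta_\delta)$) is a reasonable point of care that the paper leaves implicit.
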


 Proposition \ref{invariance} is the most important to obtain unboundedness of $(\mathcal{L}(L_{\delta}), d)$. In \cite{Kh09}, Khanevsky proved the similar property  and obtained the unboundedness for the case where the ambient space is two-dimensional open ball. In \cite{Se13}, by a different proof, Seyfaddini also obtained the similar property for $(\mathcal{L}(Re(B^{2n})), d)$. 

\begin{Prop}\label{invariance}
If two Hamiltonian diffeomorphisms $\phi, \psi \in {\rm Ham}_c(B^{2} \times B^{2} , \bar{\omega}_{0})$ satisfy 
\[\phi(L_\delta) = \psi(L_\delta), \]
then we have
\[| \mu^\tau_\delta (\phi) - \mu^\tau_\delta (\psi) | \le \frac{D_{\mu^\tau}}{\delta {\rm vol}(S^{2}\times S^{2})}~~~{\rm for~all}~~\frac{1}{2} < \delta \le 1, ~0 < \tau < \frac{1}{2},\]
where $D_{\mu^\tau}$ denotes the defect of $\mu^\tau$.
\end{Prop}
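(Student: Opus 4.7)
The plan is to set $\eta := \psi^{-1}\phi$, so that $\eta(L_\delta) = L_\delta$ and $\phi = \psi\eta$. Since ${\rm Cal}_{\Theta_\delta(B^{2}\times B^{2})}$ is a genuine homomorphism while $\mu^\tau$ is a quasi-morphism of defect $D_{\mu^\tau}$ on ${\rm Ham}(S^{2}\times S^{2},\bar{\omega}_{std})$, the definition (\ref{def_pullbacked_quasimor}) immediately yields
\[|\mu^\tau_\delta(\phi) - \mu^\tau_\delta(\psi) - \mu^\tau_\delta(\eta)| \le \frac{D_{\mu^\tau}}{\delta\,{\rm vol}(S^{2}\times S^{2})}.\]
Hence it suffices to prove that $\mu^\tau_\delta(\eta) = 0$ for every $\eta \in {\rm Ham}_c(B^{2}\times B^{2},\bar{\omega}_{0})$ preserving $L_\delta$ setwise.

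For this step I would use the torus $S^{1}_{0} \times S^{1}_{eq} \subset S^{2}\times S^{2}$, which is $\zeta^{{\mathfrak b}(\tau)}_{e_\tau}$-superheavy: by Theorem \ref{FOOO12result}~(2) the Clifford torus $S^{1}_{eq}\times S^{1}_{eq}$ is superheavy, and a rotation of the first $S^2$ carrying $S^{1}_{eq}$ onto $S^{1}_{0}$ is Hamiltonian, so the symplectic invariance of $\zeta^\tau$ transports the superheavyness. Now $\Theta_\delta(L_\delta) = S^{1}_{0} \times \theta_\delta(Re(B^{2}))$ is contained in $S^{1}_{0}\times S^{1}_{eq}$, and the complementary piece $S^{1}_{0}\times (S^{1}_{eq}\setminus \theta_\delta(Re(B^{2})))$ lies entirely outside the image $\Theta_\delta(B^{2}\times B^{2})$. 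Thus, provided one can produce a Hamiltonian $F$ generating $\eta$ with $F_t|_{L_\delta} \equiv 0$ for every $t$, the extension $\tilde F := \delta F \circ \Theta_\delta^{-1}$ vanishes identically on the whole superheavy torus, and the sandwich inequality (\ref{ineq_asymptotic_quasistate}) forces $\zeta^\tau(\tilde F) = 0$. Lemma \ref{expression_with_qusistate} then gives $\mu^\tau_\delta(\eta) = 0$.

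The crux, and main obstacle, is the production of such an $F$. My plan is to realise $\eta$ as the time-one map of a Hamiltonian isotopy $\{\eta_s\}_{s\in[0,1]}$ from the identity, lying entirely in ${\rm Ham}_c(B^{2}\times B^{2},\bar{\omega}_{0})$, with $\eta_s(L_\delta) = L_\delta$ for every $s$. Once such an isotopy is in hand, the generating vector field $X_{F_s}$ is tangent to $L_\delta$ along $L_\delta$, so $dF_s|_{TL_\delta} = 0$, and connectedness of $L_\delta$ makes $F_s|_{L_\delta}$ a constant depending only on $s$; compact support of $F_s$ in $B^{2}\times B^{2}$ (which forces $F_s$ to vanish on the parts of $L_\delta$ where $|{\rm Re}\,z_2|$ is close to $1$) then makes this constant $0$. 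The delicate point is that $L_\delta \cong S^{1}\times \mathbb{R}$ has non-trivial compactly supported mapping class group, so unlike the case of $Re(B^{2n})$ treated by Seyfaddini, the existence of the required $L_\delta$-preserving isotopy is not automatic. I expect to establish it by combining the $S^{1}$-rotational symmetry of $T_\delta$, the anti-symplectic involution $z_2 \mapsto \bar z_2$ fixing $Re(B^{2})$, and a flux-type obstruction showing that Dehn-twist monodromies along $T_\delta$ cannot be realised by compactly supported Hamiltonian flows on $B^{2}\times B^{2}$, since $T_\delta$ already bounds the disk $\{|z_1|^2 \le 1/(2\delta)\}\times\{{\rm pt}\}$.
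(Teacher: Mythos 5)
Your reduction to showing $\mu^\tau_\delta(\eta)=0$ for $\eta$ preserving $L_\delta$, and your use of the superheavy torus $S^1_0\times S^1_{eq}$ (transported from $S^1_{eq}\times S^1_{eq}$ by symplectic invariance) together with the extension-by-zero of $F\circ\Theta_\delta^{-1}$, match the paper. But the step you yourself flag as the crux is a genuine gap: you need a compactly supported Hamiltonian isotopy from the identity to $\eta$ that preserves $L_\delta$ at \emph{every} time, and your plan for producing one (rule out Dehn-twist monodromy on $L_\delta\cong S^1\times\mathbb{R}$ by a ``flux-type obstruction'', then somehow upgrade to an ambient $L_\delta$-preserving isotopy) is speculative at both stages. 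Even if you could show that $\eta|_{L_\delta}$ is compactly-supported isotopic to the identity, nothing you wrote extends such an isotopy of the annulus to an ambient compactly supported \emph{Hamiltonian} isotopy of $B^2\times B^2$ preserving $L_\delta$; this is not a routine extension problem, and the proposal gives no argument for it.

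The paper sidesteps this entirely. Instead of connecting $\eta=\phi^1_F$ to the identity, it connects it to something on which $\mu^\tau_\delta$ vanishes for a different reason: set $a_s(z_1,z_2)=(z_1,z_2/s)$ and $\psi_s=a_s^{-1}\phi^1_F a_s$ on $\{|z_2|\le s\}$, extended by the identity. Each $\psi_s$ is a compactly supported symplectomorphism preserving $L_\delta$ (hence Hamiltonian, since $H^1_c(B^2\times B^2)=0$), $\psi_1=\phi^1_F$, and for small $\epsilon$ the map $\psi_\epsilon$ is supported in a displaceable set, so $\mu^\tau_\delta(\psi_\epsilon)=0$ by the Calabi property. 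The path $t\mapsto\psi_\epsilon^{-1}\psi_{t(1-\epsilon)+\epsilon}$ preserves $L_\delta$ at every time, so its generating Hamiltonian $H$ is constant, hence zero, on the non-compact $L_\delta$, and your superheavy argument applies to give $\mu^\tau_\delta(\psi_\epsilon^{-1}\phi^1_F)=0$. Quasi-additivity then yields only the bound $|\mu^\tau_\delta(\phi^1_F)|\le D_{\mu^\tau}/(\delta\,\mathrm{vol}(S^2\times S^2))$, which the paper upgrades to $\mu^\tau_\delta(\phi^1_F)=0$ by running the same argument on $(\phi^1_F)^n$ and invoking homogeneity. If you replace your unproved isotopy step with this rescaling trick (and accept getting exact vanishing via the homogeneity trick rather than directly), your argument closes.
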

 We prove this proposition by slightly modifying Seyfaddini's proof.
\begin{proof}
Throughout the proof, we fix $\delta$, $\tau$ with $1/2< \delta \le 1$, $0 < \tau < 1/2$, respectively. From the definition of $\mu^\tau_\delta$ and its homogeneity we obtain that
\begin{equation*}
	\begin{split}
 &|\mu^\tau_\delta (\phi^{-1}\psi) +\mu^\tau_\delta (\phi) - \mu^\tau_\delta (\psi)| \\
 &= |\mu^\tau_\delta (\phi^{-1}\psi) -\mu^\tau_\delta (\phi^{-1}) - \mu^\tau_\delta (\psi)| \\
 &=\frac{1}{\delta {\rm vol}(S^{2}\times S^{2})}|\mu^\tau(\Theta_\delta \phi^{-1}\psi \Theta^{-1}_\delta) - \mu^\tau (\Theta_\delta \phi^{-1} \Theta_\delta^{-1}) - \mu^\tau (\Theta_\delta \psi \Theta_\delta^{-1})|\\
 & \le \frac{D_{\mu^\tau}}{\delta {\rm vol}(S^{2}\times S^{2})}.
	\end{split}
\end{equation*}
Consequently, it is sufficient to prove the proposition that $\mu^\tau_{\delta}(\phi)$ vanishes for Hamiltonian diffeomorphisms $\phi$ satisfying $\phi (L_{\delta}) = L_{\delta}$.

 Now we take any Hamiltonian $F \in C^\infty_c([0,1] \times (B^{2} \times B^{2}))$ and assume the Hamiltonian diffeomorphism $\phi^{1}_{F}$ preserves the 
Lagrangian submanifold $L_{\delta}$.

 For $0 < s \le 1$, we define a diffeomorphism $a_s : B^{2} \times B^{2}(s) \to B^{2} \times B^{2}$ by
\[a_{s}(z_1,z_2):=(z_1, \frac{z_2}{s}).\]
Using this map, we define a compactly supported symplectic diffeomorphism $\psi_{s}$ for each $0<s\le1$:
\[\psi_s :=
\begin{cases}
a_s^{-1} \phi^1_{F} a_s & |z_2| \le s  \\
id & |z_2| \ge s
\end{cases}
.\]
As compactly supported cohomology group $H^{1}_{c}(B^{2} \times B^{2} ; \mathbb{R})=0$ and $\bar{\omega}_{0}$ is exact on $B^{2} \times B^{2}$, any isotopy of compactly supported Symplectic diffeomorphisms on  $(B^{2} \times B^{2} , \bar{\omega}_{0})$ is a compactly supported Hamiltonian isotopy. Thus, for each $0<s\le 1$, we can take a time-dependent Hamiltonian $F^{s} \in C_{c}^{\infty}([0,1] \times B^{2}\times B^{2})$ such that $\psi_{s} = \phi^{1}_{F^{s}}$.

 This Hamiltonian diffeomorphisms $\psi_{s}$ have the following properties:
\begin{enumerate}[(1)]
\item $\psi_{1} = \phi^{1}_{F^{1}}=\phi^{1}_{F}$,
\item $\psi_{s}$ preserves $L_{\delta}$ for each $0 < s \le 1$,
\item There exists a compact subset $K_{s}$ in $B^{2}$ such that $F^{s}$ is supported in $K_{s} \times B^{2}(s) \subset B^{2} \times B^{2}$ for each $0 < s \le 1$.
\end{enumerate}

 Hereafter we fix sufficiently small $\epsilon > 0$ such that $K_{\epsilon} \times B^{2}(\epsilon)$ is displaceable inside the bi-disks $B^{2} \times B^{2}$. By Proposition \ref{vanishing} (2), it follows that
\begin{equation}\label{vanishing_by_supp}
\mu^{\tau}_{\delta}(\psi_{\epsilon}) = 0.
\end{equation}

 We take a time-dependent Hamiltonian $H \in C_{c}^{\infty}([0,1] \times B^{2}\times B^{2})$ so that $\phi^t_H := \psi^{-1}_\epsilon \psi_{t(1-\epsilon) + \epsilon}$ for $0 \le t \le 1$. In particular, we have the time-one map $\phi^1_H = \psi^{-1}_{\epsilon} \phi^{1}_{F}$ by the above property (1). 

 We note that Hamiltonian vector field $X_{H_{t}}$ is tangent to the Lagrangian submanifold $L_{\delta}$ since $\phi^t_H$ preserves $L_{\delta}$. Consequently, for each $t \in [0,1]$, $H_{t}=H(t,\cdot)$ is constant on $L_{\delta}$. Because of this and non-compactness of $L_{\delta}$, the restriction of $H_{t}$ to $L_{\delta}$ is $0$ for all $t \in [0,1]$. Since $L_{\delta} = T_{\delta} \times Re(B^{2})$ is mapped into $S^{1}_{0} \times S^{1}_{eq}$ by $\Theta_{\delta}$, hence $H \circ \Theta^{-1}_{\delta}$ vanishes on a torus $S^{1}_{0} \times S^{1}_{eq}$. On the other hand $S^{1}_{0} \times S^{1}_{eq}$ is $\zeta^{{\mathfrak b}(\tau)}_{e_{\tau}}$-superheavy by Fukaya-Oh-Ohta-Ono's result (Theorem \ref{FOOO12result}), therefore we have 
\begin{equation}\label{vanishing_by_superheavy}
\mu^{\tau}_{\delta} (\phi^{1}_{H}) = 0.
\end{equation}
Here we used Proposition \ref{superheavy}.

 As a consequence of these two equalities (\ref{vanishing_by_supp}), (\ref{vanishing_by_superheavy}) and quasi-additivity of $\mu^{\tau}_{\delta}$, it follows that
\[|\mu^{\tau}_{\delta}(\phi^{1}_{F})| = |\mu^{\tau}_{\delta}(\phi^{1}_{F}) - \mu^{\tau}_{\delta}(\psi_{\epsilon})-\mu^{\tau}_{\delta}(\phi^{1}_{H}) | \le  \frac{D_{\mu^\tau}}{\delta {\rm vol}(S^{2}\times S^{2})}.\]
Because $(\phi^{1}_{F})^{n}$ preserves $L_{\delta}$  for any $n \in \mathbb{N}$, we can apply the same argument to $(\phi^{1}_{F})^{n}$ and obtain $|\mu^{\tau}_{\delta}((\phi^{1}_{F})^{n})| \le \delta^{-1} {\rm vol}(S^{2}\times S^{2})^{-1}  D_{\mu^\tau}$.
Since $\mu^{\tau}_{\delta}$ is a homogeneous quasi-morphism, we have
\[\mu^{\tau}_{\delta}(\phi^{1}_{F}) = 0.\]

\end{proof}

 By applying Proposition \ref{vanishing} (1) and Proposition \ref{invariance}, we obtain the following.

\begin{Prop}\label{estimate_by_quasimor}
For any $\phi \in {\rm Ham}_{c}(B^{2}\times B^{2}, \bar{\omega}_{0})$ and any $\frac{1}{2} < \delta \le 1, ~0 < \tau < \frac{1}{2}$, the following inequality holds.
\[\frac{\mu^{\tau}_{\delta}(\phi) - \delta^{-1} {\rm vol}(S^{2}\times S^{2})^{-1}D_{\mu^{\tau}}}{C_{\delta}} \le d(L_{\delta}, \phi(L_{\delta})),\]
where $D_{\mu^{\tau}}$ is as above.
\end{Prop}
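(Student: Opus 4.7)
The plan is to reduce the estimate to the two preceding propositions by passing to an arbitrary Hamiltonian diffeomorphism $\psi$ which carries $L_\delta$ onto $\phi(L_\delta)$, and then taking the infimum over all such $\psi$ at the end.

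First I would fix $\phi \in {\rm Ham}_c(B^2 \times B^2, \bar\omega_0)$ and pick any $\psi \in {\rm Ham}_c(B^2 \times B^2, \bar\omega_0)$ satisfying $\psi(L_\delta) = \phi(L_\delta)$. Then Proposition \ref{invariance} applies and gives
\[
|\mu^\tau_\delta(\phi) - \mu^\tau_\delta(\psi)| \le \frac{D_{\mu^\tau}}{\delta\,{\rm vol}(S^2\times S^2)}.
\]
In particular, $\mu^\tau_\delta(\phi) \le \mu^\tau_\delta(\psi) + \delta^{-1}{\rm vol}(S^2\times S^2)^{-1} D_{\mu^\tau}$.

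Next I would estimate $\mu^\tau_\delta(\psi)$ using Proposition \ref{vanishing} (1), which yields $\mu^\tau_\delta(\psi) \le |\mu^\tau_\delta(\psi)| \le C_\delta \|\psi\|$. Chaining these two inequalities gives
\[
\mu^\tau_\delta(\phi) \le C_\delta \|\psi\| + \frac{D_{\mu^\tau}}{\delta\,{\rm vol}(S^2\times S^2)},
\]
which rearranges to
\[
\frac{\mu^\tau_\delta(\phi) - \delta^{-1}\,{\rm vol}(S^2\times S^2)^{-1}\,D_{\mu^\tau}}{C_\delta} \le \|\psi\|.
\]
Since this holds for every $\psi \in {\rm Ham}_c(B^2\times B^2, \bar\omega_0)$ with $\psi(L_\delta) = \phi(L_\delta)$, taking the infimum over all such $\psi$ produces the Lagrangian Hofer distance on the right-hand side, giving the desired inequality.

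There is essentially no obstacle here beyond setting up the chain of inequalities; the only subtle point is making sure the direction of the inequality in Proposition \ref{vanishing} (1) is used correctly (as a one-sided bound on $\mu^\tau_\delta(\psi)$, not on $\mu^\tau_\delta(\phi)$ itself, since $\phi$ need not have small Hofer norm). All the real work — the displacement/Calabi cancellation and the superheavyness of $S^1_0 \times S^1_{eq}$ — is already packaged in Proposition \ref{invariance}, so this proposition is a short corollary.
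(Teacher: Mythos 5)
Your proposal is correct and follows essentially the same route as the paper: compare $\mu^\tau_\delta(\phi)$ with $\mu^\tau_\delta(\psi)$ via Proposition \ref{invariance}, bound $\mu^\tau_\delta(\psi)$ by $C_\delta\|\psi\|$ via Proposition \ref{vanishing} (1), and take the infimum over all $\psi$ with $\psi(L_\delta)=\phi(L_\delta)$. The only cosmetic difference is that the paper carries $|\mu^\tau_\delta(\phi)|$ through the chain, which yields a marginally stronger (absolute-value) form of the same inequality.
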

\begin{proof}
We take any $\psi \in {\rm Ham}_{c}(B^{2}\times B^{2}, \bar{\omega}_{0})$ satisfying $\phi(L_{\delta}) = \psi(L_{\delta})$. From Proposition \ref{invariance}, we obtain the following inequality.
\[|\mu^{\tau}_{\delta}(\phi) - \mu^{\tau}_{\delta}(\psi)| \le  \frac{D_{\mu^\tau}}{\delta {\rm vol}(S^{2}\times S^{2})}.\]
By using Proposition \ref{vanishing} (1), we have 
\[|\mu^{\tau}_{\delta}(\phi) | -  \frac{D_{\mu^\tau}}{\delta {\rm vol}(S^{2}\times S^{2})} \le |\mu^{\tau}_{\delta}(\psi)| \le C_{\delta}\|\psi\|.\]
Therefore, by definition of the metric $d$, we obtain the following inequality:
\[|\mu^{\tau}_{\delta}(\phi) | -  \frac{D_{\mu^\tau}}{\delta {\rm vol}(S^{2}\times S^{2})}  \le C_{\delta}\cdot d(L_{\delta}, \psi(L_{\delta})).\]
\end{proof}

\section{Construction of $\Phi_\delta :C^{\infty}_c((0,1)) \to \mathcal{L}(L_\delta)$}
\subsection{Locations of FOOO's superheavy tori}
To construct a mapping $\Phi_\delta :C^{\infty}_c((0,1)) \to \mathcal{L}(L_\delta)$ in Theorem \ref{main2}, we describe the locations of Fukaya-Oh-Ohta-Ono's Lagrangian superheavy tori by following Oakley-Usher's result. Let us recall their description. In \cite{OU13}, they constructed a symplectic toric orbifold $\mathcal O$ which is isomorphic to $F_{2}(0)$ as symplectic toric orbifolds by gluing $S^{2}\times S^{2} \setminus \bar \Delta $ to $B^{4} / \{\pm 1\}$. Here $\bar \Delta$ denotes anti-diagonal of $S^{2}\times S^{2}$ and $B^{4}$ is a four dimensional open ball. The moment map $\pi : \mathcal{O} \to \mathbb{R}^{2}$, which has the same moment polytope $P$ of $F_{2}(0)$ in Section \ref{review_FOOO}, is expressed on $S^{2}\times S^{2} \setminus \bar \Delta $ by
\[\pi(v,w) = \Bigl(\frac{1}{2}|v + w| + \frac{1}{2}(v + w) \cdot e_{1},~ 1-\frac{1}{2}|v + w|\Bigr) \in \mathbb{R}^{2}\]
for $(v,w) \in S^{2}\times S^{2} \setminus \bar \Delta$ and $e_{1} := (1, 0, 0)$. Therefore one can consider a torus fiber $L(u) \subset F_{2}(0)$ as $\pi^{-1}(u) \subset S^{2}\times S^{2} \setminus \bar \Delta$ for any interior point $u$ in the moment polytope.

 By replacing $B^{4} / \{\pm 1\}$ by the unit disk cotangent bundle $D^{\ast}_{1}S^{2}$, they obtained a smoothing $\Pi :{ \hat {\mathcal O}} \to \mathcal O $ which maps the zero-section of $D^{\ast}_{1}S^{2}$ to the singularity of $\mathcal O$ and whose restriction to $S^{2}\times S^{2} \setminus \bar \Delta $ is the identity mapping. Moreover they gave an explicit symplectic morphism ${ \hat {\mathcal O}} \xrightarrow{\sim} S^{2} \times S^{2}$ which is the identity mapping on $S^{2}\times S^{2} \setminus \bar \Delta $. Hence above tori $\pi^{-1}(u)$ are invariant under the smoothing and the symplectic morphism ${ \hat {\mathcal O}} \xrightarrow{\sim} S^{2} \times S^{2}$. 

 Using this construction, Oakley-Usher  proved that the Entov-Polterovich's exotic monotone torus in \cite{EP09} is Hamiltonian isotopic to the Fukaya-Oh-Ohta-Ono's torus over $(1/2, 1/2)$ (for details, see the proof of Proposition 2.1 \cite{OU13}).
\begin{Prop}[Oakley-Usher \cite{OU13}]
Fukaya-Oh-Ohta-Ono's superheavy Lagrangian tori $T_{\tau}$ can be expressed as  
\[T_\tau = \left\{(v,w) \in S^2 \times S^2 \mid \frac{1}{2}|v + w| + \frac{1}{2}(v + w) \cdot e_{1} = \tau, ~ 1-\frac{1}{2}|v + w| = 1- \tau \right\},\]
where the parameter $\tau$ is in $(0, 1/2]$. In particular, the Lagrangian torus $T_{1/2}$ is Entov-Polterovich's exotic monotone torus.
\end{Prop}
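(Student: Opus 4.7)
The plan is to trace the chain of identifications defining $T_\tau$ and then apply the explicit formula for the moment map. By construction in Section \ref{review_FOOO}, the torus $T_\tau$ is the image in $S^{2} \times S^{2}$ of the toric fiber $L((\tau, 1-\tau)) \subset F_{2}(0)$ under the smoothing $\hat{F_{2}(0)} \cong S^{2} \times S^{2}$. Oakley-Usher provide a concrete model for this chain: an alternative symplectic toric orbifold $\mathcal{O}$ isomorphic to $F_{2}(0)$, together with an explicit smoothing $\Pi \colon \hat{\mathcal{O}} \to \mathcal{O}$ and an explicit symplectomorphism $\hat{\mathcal{O}} \xrightarrow{\sim} S^{2} \times S^{2}$. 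Under the isomorphism $F_{2}(0) \cong \mathcal{O}$ of symplectic toric orbifolds the toric fiber $L(u)$ corresponds to $\pi^{-1}(u) \subset \mathcal{O}$, so it suffices to describe this preimage set-theoretically inside $S^{2} \times S^{2}$.

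The crucial observation is that for $\tau \in (0, 1/2]$ the point $u = (\tau, 1-\tau)$ is an interior point of the moment polytope $P$, distinct from the singular vertex $(0,1)$. By the description recalled just before the statement, for every such $u$ the fiber $\pi^{-1}(u)$ lies entirely inside the open chart $S^{2} \times S^{2} \setminus \bar{\Delta} \subset \mathcal{O}$, away from the glued piece $B^{4}/\{\pm 1\}$. Since both $\Pi$ and the explicit identification $\hat{\mathcal{O}} \cong S^{2} \times S^{2}$ restrict to the identity on this chart, the set $T_{\tau} \subset S^{2} \times S^{2}$ coincides pointwise with $\pi^{-1}((\tau, 1-\tau))$ computed using the displayed formula for $\pi$ on $S^{2} \times S^{2} \setminus \bar{\Delta}$. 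Substituting this formula into $\pi(v,w) = (\tau, 1-\tau)$ yields the two defining equations $\tfrac{1}{2}|v+w| + \tfrac{1}{2}(v+w) \cdot e_{1} = \tau$ and $1 - \tfrac{1}{2}|v+w| = 1-\tau$, which is the asserted expression.

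The concluding claim that $T_{1/2}$ is Hamiltonian isotopic to Entov-Polterovich's exotic monotone torus need not be reproved: it is precisely the content of Proposition 2.1 of \cite{OU13}, obtained by comparing the above explicit description at $\tau = 1/2$ with the explicit definition of the Entov-Polterovich torus in \cite{EP09}.

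The only point that requires genuine checking rather than mere bookkeeping is the containment $\pi^{-1}(u) \subset S^{2} \times S^{2} \setminus \bar{\Delta}$ for every interior $u$. For $\tau$ close to $0$ the torus approaches the anti-diagonal, so one must confirm that it never crosses into the glued $B^{4}/\{\pm 1\}$ piece of $\mathcal{O}$. This follows from the explicit form of the Oakley-Usher gluing together with the observation that the displayed formula for $\pi$ attains the vertex $(0,1)$ only along $\bar{\Delta}$ itself, so its preimage over any interior $(\tau, 1-\tau)$ with $\tau > 0$ remains inside the chart $S^{2} \times S^{2} \setminus \bar{\Delta}$ where the identity identification applies.
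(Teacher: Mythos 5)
Your argument is correct and follows essentially the same route as the paper, which does not give a formal proof but derives the statement from exactly the chain of identifications you describe: the Oakley--Usher model $\mathcal{O}$, the smoothing $\Pi$, and the symplectomorphism $\hat{\mathcal{O}}\xrightarrow{\sim}S^{2}\times S^{2}$, all restricting to the identity on $S^{2}\times S^{2}\setminus\bar{\Delta}$, so that $T_{\tau}=\pi^{-1}((\tau,1-\tau))$. Your extra check that the fiber avoids $\bar{\Delta}$ for $\tau>0$ (since $|v+w|=2\tau>0$ on the fiber) is the right point to verify and is consistent with the paper's remark that the tori $\pi^{-1}(u)$ are invariant under the smoothing and the identification.
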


 The following corollary is proved by an easily calculation.
\begin{Cor}
The image of $i$-th projection ${\rm pr}_i : S^2 \times S^2 \to S^2$ $(i=1,2)$ is 
\begin{equation}
{\rm pr}_{i} (T_\tau) = \left\{ v \in S^2 \mid |v \cdot e_1| \le \sqrt{1- \tau^2}\right\},
\end{equation}
where $\tau$ is $0 < \tau \le 1/2$.
\end{Cor}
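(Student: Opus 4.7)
The plan is to reduce the two defining equations of $T_\tau$ to a pair of constraints that are linear/quadratic in $(v,w)$, and then for a fixed $v \in S^2$ check when a compatible $w \in S^2$ exists. Since $T_\tau$ is invariant under swapping $v$ and $w$, it suffices to treat $i=1$.

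First I would rewrite the defining equations of $T_\tau$. The equation $1 - \tfrac{1}{2}|v+w| = 1 - \tau$ gives $|v+w| = 2\tau$, and substituting this into $\tfrac{1}{2}|v+w| + \tfrac{1}{2}(v+w)\cdot e_1 = \tau$ yields $(v+w)\cdot e_1 = 0$. Squaring $|v+w|=2\tau$ and using $|v|^2=|w|^2=1$ gives the equivalent scalar relation $v\cdot w = 2\tau^2 - 1$. So membership in $T_\tau$ is the same as
\[
|v|=|w|=1,\qquad w\cdot e_1 = -v\cdot e_1,\qquad v\cdot w = 2\tau^2 - 1.
\]

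Next, I fix $v\in S^2$, set $a := v\cdot e_1$, and decompose $v = a e_1 + v^\perp$ and $w = -a e_1 + w^\perp$ with $v^\perp, w^\perp$ in the two-dimensional plane $e_1^\perp$. The unit-length conditions force $|v^\perp|^2 = |w^\perp|^2 = 1 - a^2$, and the remaining constraint $v\cdot w = 2\tau^2 - 1$ becomes
\[
v^\perp \cdot w^\perp \;=\; 2\tau^2 - 1 + a^2.
\]
If $|a|=1$, then $v^\perp = 0$ and the right side equals $2\tau^2 > 0$, so no $w^\perp$ exists. If $|a|<1$, the equation $v^\perp \cdot w^\perp = c$ with $c := 2\tau^2 - 1 + a^2$ defines a line in the plane $e_1^\perp$ at distance $|c|/\sqrt{1-a^2}$ from the origin, and the question becomes whether this line meets the circle of radius $\sqrt{1-a^2}$. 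By Cauchy--Schwarz this is equivalent to $|c| \le 1-a^2$.

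Finally I analyze $|2\tau^2 - 1 + a^2| \le 1 - a^2$ by cases on the sign of $2\tau^2 - 1 + a^2$. When $a^2 \ge 1 - 2\tau^2$ the inequality becomes $a^2 \le 1-\tau^2$; when $a^2 < 1 - 2\tau^2$ it reduces to $-2\tau^2 \le 0$, which is automatic. Combining, the condition on $a$ is precisely $a^2 \le 1-\tau^2$, i.e.\ $|v\cdot e_1| \le \sqrt{1-\tau^2}$, as claimed. The proof is thus an elementary computation with no real obstacle beyond the inequality bookkeeping; the only subtlety is observing that the vacuous borderline case $|a|=1$ (where $v^\perp$ vanishes) is correctly excluded by the strict positivity of $\tau$.
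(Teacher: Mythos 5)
Your proof is correct, and it is precisely the ``easy calculation'' that the paper asserts but does not write out: reducing the two defining equations of $T_\tau$ to $(v+w)\cdot e_1=0$ and $v\cdot w=2\tau^2-1$, then checking for fixed $v$ when a compatible $w$ exists via Cauchy--Schwarz in the plane $e_1^{\perp}$. The case analysis on the sign of $2\tau^2-1+a^2$ and the exclusion of $|a|=1$ are both handled correctly, so nothing is missing.
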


 By this corollary and the definition of the conformally symplectic embedding $\Theta_\delta :B^{2}\times B^{2} \hookrightarrow S^2\times S^2$. We have the following.
\begin{Cor}\label{image_cor}
For any $(2 + \sqrt{3}) /4 < \delta \le 1$ there exists a sufficiently small $\varepsilon_\delta > 0$ such that
\[\bigcup_{\tau \in I_\delta} T_\tau  \subset  \Theta_\delta (B^{2} \times B^{2}),~~~~~~I_\delta := [1/2 - \varepsilon_\delta , 1/2].\]
\end{Cor}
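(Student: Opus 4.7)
The plan is to reduce the containment to an elementary inequality on the first coordinate of points in $S^2$, and then verify that the numerical threshold $(2+\sqrt{3})/4$ is exactly the critical value coming from $\tau=1/2$.

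First, I would unwind the image of $\Theta_\delta$. By the description of $\theta_\delta$ given just after (\ref{Theta_delta}), the image $\theta_\delta(B^2)$ is the open spherical cap $\{v \in S^2 \mid v_1 < 2\delta-1\}$. Hence
\[\Theta_\delta(B^2 \times B^2) = \{(v,w) \in S^2 \times S^2 \mid v_1 < 2\delta-1,\ w_1 < 2\delta-1\}.\]
On the other hand, the previous corollary shows that for $(v,w) \in T_\tau$ we have $|v \cdot e_1|, |w \cdot e_1| \le \sqrt{1-\tau^2}$, so a sufficient condition for $T_\tau \subset \Theta_\delta(B^2 \times B^2)$ is the strict inequality
\[\sqrt{1-\tau^2} < 2\delta - 1.\]

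Next, I would check the borderline case $\tau = 1/2$. There $\sqrt{1-\tau^2} = \sqrt{3}/2$, so the condition becomes $\sqrt{3}/2 < 2\delta - 1$, equivalently $\delta > (2+\sqrt{3})/4$, which is exactly the hypothesis of the corollary. Thus at $\tau = 1/2$ the inequality holds strictly, and since $\tau \mapsto \sqrt{1-\tau^2}$ is continuous, the same strict inequality persists for all $\tau$ in some small closed interval $[1/2 - \varepsilon_\delta, 1/2]$ with $\varepsilon_\delta > 0$. This yields the required containment $\bigcup_{\tau \in I_\delta} T_\tau \subset \Theta_\delta(B^2 \times B^2)$.

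There is no real obstacle here; the only bookkeeping point is to confirm that the condition involves only the first coordinate and to perform the algebraic manipulation $\sqrt{1-\tau^2} < 2\delta - 1$ correctly, noting in particular that the right-hand side is positive since $\delta > 1/2$, so squaring both sides is legitimate. This makes explicit why the threshold $(2+\sqrt{3})/4$ in Theorem \ref{main2} is sharp for the method used: it is precisely the value of $\delta$ at which the superheavy torus $T_{1/2}$ ceases to fit inside the image of the conformal embedding $\Theta_\delta$.
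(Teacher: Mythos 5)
Your proof is correct and is exactly the argument the paper leaves implicit: the paper offers no written proof beyond citing the projection formula ${\rm pr}_i(T_\tau)=\{|v\cdot e_1|\le\sqrt{1-\tau^2}\}$ and the fact that $\Theta_\delta(B^2\times B^2)$ is the product of the caps $\{v_1<2\delta-1\}$, and your reduction to $\sqrt{1-\tau^2}<2\delta-1$ at $\tau=1/2$ (giving $\delta>(2+\sqrt3)/4$) together with continuity in $\tau$ is the intended "easy calculation."
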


\begin{Rmk}\label{delta_condition}
\normalfont
The condition $(2 + \sqrt{3}) /4 < \delta \le 1$ in Theorem \ref{main2} guarantees that the image of $\Theta_\delta$ contains a continuous subfamily of superheavy tori $T_{\tau} \subset S^{2} \times S^{2}$ as in Corollary \ref{image_cor}. However, for any $1/2 < \delta \le 1$, it is likely that there exist $\phi_{\delta} \in {\rm Ham}(S^{2}\times S^{2})$ such that the image of $\Theta_\delta$ contains $\cup_{\tau \in I'_\delta} \phi_{\delta}(T_{\tau})$ for some open interval $I'_\delta \subset (0,1/2]$. In this case, we can show Theorem \ref{main2} under the weaker assumption $1/2 < \delta \le 1$.
\end{Rmk}

\subsection{Construction of $\Phi_\delta$}
We fix $\delta$ with $(2 + \sqrt{3}) /4 < \delta \le 1$ and consider the interval $I_{\delta} = [1/2 - \varepsilon_{\delta}, 1/2]$ in Corollary \ref{image_cor}. We take a segment $J_{\delta}$ in the moment polytope $P = \pi (\mathcal{O}) \subset \mathbb{R}^{2}$ defined by
\[J_{\delta} := \{(\tau, 1- \tau) \mid \tau \in {\rm Int}(I_{\delta})\} \subset {\rm Int}(P).\]
We denote by $B^{2}(u_{0}; \sqrt2 \varepsilon_{\delta})$ the open disk of which center is $u_{0}:=(1/2,1/2) \in {\rm Int}(P)$ and radius is $\sqrt2 \varepsilon_{\delta}$. We may take and fix a sufficiently small $\varepsilon_{\delta} > 0$ so that the open disk $B^{2}(u_{0}; \sqrt2 \varepsilon_{\delta})$ is contained in $P$ and moreover the inverse image of $B^{2}(u_{0}; \sqrt2 \varepsilon_{\delta})$ under $\tilde{\pi} := \pi \circ \Pi : \hat{O} \to P$ is contained in the image of $\Theta_{\delta}:B^{2} \times B^{2} \to S^{2} \times S^{2}$.

 We identify  $J_{\delta}$ with an open interval $(0,1)$ and will define a map $\Phi_{\delta}$ on $C^{\infty}_{c}(J_{\delta})$. First, we extend a function $f \in C^{\infty}_{c}(J_{\delta})$ to the function $f_{B^2}$ on the open disk $B^{2}(u_{0}; \sqrt2 \varepsilon_{\delta})$ which is constant along the circle centered at $u_{0}$. More explicitly, we define $f_{B^2}: B^{2}(u_{0}; \sqrt2 \varepsilon_{\delta}) \to \mathbb{R}$ by

\[f_{B^2}(u) : = f \large((~|u-u_{0}| / \sqrt{2}, 1 - |u-u_{0}| /\sqrt{2} ~)\large),~~ u \in B^{2}(u_{0}; \sqrt2 \varepsilon_{\delta}) \subset {\rm Int}(P).\]

 We define $\tilde f \in C^{\infty}_{c}( B^{2} \times B^{2}) $ for $f \in C^{\infty}_{c}(J_{\delta})$ as the pull-back: 
\begin{equation}\label{f_tilde_def}
\tilde f :=  \Theta_{\delta}^{\ast}\tilde\pi^{\ast}f_{B^2}.
\end{equation}

By the construction, the restriction of $\tilde f$ on $\Theta_{\delta}^{-1}(T_{\tau})$ is constantly equal to $f(\tau)$ for all $1/2 - \varepsilon_{\delta}< \tau < 1/2$.

\begin{Def}\label{Psi_delta_def}
\normalfont
For any $(2 + \sqrt{3})/4 < \delta \le1$, we define $\Phi_{\delta} : C^{\infty}_{c}((0, 1)) \to \mathcal{L}(L_{\delta})$ by the following expression:
\[\Phi_{\delta} (f) := \phi^{1}_{\tilde{f}}(L_{\delta}), \]
where we regard $f$ as an element in $C^{\infty}_{c}(J_{\delta})$.
\end{Def}

 For the proof of Theorem \ref{main2}, we prove the next lemma.
\begin{Lem}\label{lem__norm}
For any $f,g \in C^{\infty}_c((1/2 - \varepsilon_{\delta},1/2))$ there exists a constant $1/2 - \varepsilon_{\delta} < \tau' < 1/2$ such that 
\[|\mu^{\tau'}_\delta(\phi^1_{\tilde{f}-\tilde{g}})| = \|f - g \|_{\infty},\]
where $\delta$ is $(2 + \sqrt{3})/4 < \delta \le1$.
\end{Lem}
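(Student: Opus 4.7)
The plan is to reduce the lemma to a direct application of Proposition \ref{superheavy}. By linearity of the construction $f\mapsto \tilde f$ in (\ref{f_tilde_def}), I will first observe that $\tilde f-\tilde g=\widetilde{f-g}=\Theta_\delta^{\ast}\tilde\pi^{\ast}(f-g)_{B^2}$. This identification is crucial because it lets me compute $(\tilde f-\tilde g)\circ\Theta_\delta^{-1}$ on the tori $T_{\tau'}$ that sit inside $\Theta_\delta(B^2\times B^2)$ for $\tau'\in I_\delta$ (Corollary \ref{image_cor}).

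Next, I will use the definition of $f_{B^2}$: since $f_{B^2}$ is, by construction, constant along circles around $u_0=(1/2,1/2)$, and by Oakley-Usher's description together with Theorem \ref{FOOO12result}(1), the torus $T_{\tau'}$ is precisely the fibre $\tilde\pi^{-1}((\tau',1-\tau'))$ of the smoothed moment map. Hence for every $\tau'\in(1/2-\varepsilon_\delta,1/2)$ we have the pointwise identity
\[(\tilde f-\tilde g)\circ\Theta_\delta^{-1}\big|_{T_{\tau'}}\equiv (f-g)(\tau').\]
Since $T_{\tau'}$ is $\zeta^{\mathfrak{b}(\tau')}_{e_{\tau'}}$-superheavy by Theorem \ref{FOOO12result}(1), Proposition \ref{superheavy} applied to $F=\tilde f-\tilde g$ and $X=T_{\tau'}$ yields
\[\mu^{\tau'}_\delta(\phi^1_{\tilde f-\tilde g})=(f-g)(\tau').\]

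Finally, I will choose $\tau'$ to realise the sup norm. Since $h:=f-g\in C^\infty_c((1/2-\varepsilon_\delta,1/2))$ has compact support inside the open interval, $|h|$ attains its maximum $\|h\|_\infty$ at some interior point $\tau'\in(1/2-\varepsilon_\delta,1/2)$ (the trivial case $f=g$ is immediate: any $\tau'$ in the admissible range works). For this $\tau'$,
\[|\mu^{\tau'}_\delta(\phi^1_{\tilde f-\tilde g})|=|(f-g)(\tau')|=\|f-g\|_\infty,\]
which is exactly the claim.

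Conceptually there is no real obstacle beyond bookkeeping, because the whole construction $\Phi_\delta$ was engineered so that the generating Hamiltonian $\tilde f$ restricts to the value $f(\tau)$ on each of Fukaya--Oh--Ohta--Ono's superheavy tori in the chosen family. The only point requiring minor care is that $\tau'$ must genuinely lie in the open parameter range where $\mu^{\tau'}$ is defined; this is guaranteed by the compact support of $f-g$ in the open interval $(1/2-\varepsilon_\delta,1/2)\subset(0,1/2)$.
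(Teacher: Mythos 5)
Your proof is correct and follows essentially the same route as the paper: both reduce the claim to the fact that $(\tilde f-\tilde g)\circ\Theta_\delta^{-1}$ is constant equal to $(f-g)(\tau')$ on the superheavy torus $T_{\tau'}$, apply Proposition \ref{superheavy}, and pick $\tau'$ realising the sup norm. Your write-up is merely more explicit about the linearity of $f\mapsto\tilde f$, the attainment of the maximum inside the open interval, and the trivial case $f=g$, all of which the paper leaves implicit.
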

\begin{proof}
For any $f,g \in C^{\infty}_c((1/2 - \varepsilon_{\delta}, 1/2))$, there exists $\tau' \in (1/2 - \varepsilon_{\delta}, 1/2)$ such that
\[\|f-g\|_{\infty} = \max |f(x)-g(x)| = |f(\tau') - g(\tau')|.\]
Thus $\mu^{\tau'}_\delta(\phi^1_{\tilde{f}-\tilde{g}})$ is equal to $\|f-g\|_{\infty}$ because of (\ref{f_tilde_def}) and Proposition \ref{superheavy}.
\end{proof}
\section{Proof of Theorem \ref{main1} and Theorem \ref{main2}.}

\begin{proof}[proof of Theorem \ref{main1}]
For all $1/2 < \delta \le 1$, the image of $\Theta_{\delta}$ contains the torus $S^{1}_{0}\times S^{1}_{0} \subset (S^2\times S^2, \bar{\omega}_{std})$. If we take a Hamiltonian $H \in C_{c}^{\infty}(B^{2}\times B^{2})$ for any $h \in \mathbb{R}$ such that $H \equiv h$ on the torus $\Theta_{\delta}^{-1}(S^{1}_{0}\times S^{1}_{0})$, then we have from Proposition \ref{superheavy} and $\zeta^{{\mathfrak b}(\tau)}_{e_{\tau}}$-superheavyness of  $S^{1}_{0}\times S^{1}_{0}$
\[\mu_{\delta}^{\tau}(\phi^{1}_{H}) = h,\]
where we fix any $\tau \in (0, \frac{1}{2})$. By applying Proposition \ref{estimate_by_quasimor}, we obtain
\[\frac{h- \delta^{-1} {\rm vol}(S^{2}\times S^{2})^{-1}D_{\mu^{\tau}}}{C_{\delta}} \le d(L_{\delta}, \phi(L_{\delta})).\]
Since $h$ is an arbitrary constant, Theorem \ref{main1} is proved.
\end{proof}

 Theorem \ref{main1} is proved by using a single quasi-morphism $\mu_{\delta}^{\tau}$ on ${\rm Ham}_{c}(B^{2}\times B^{2}, \bar{\omega}_{0})$.

 On the other hand, to prove Theorem \ref{main2}, it is necessary that the image $\Theta_{\delta}(B^{2}\times B^{2})$ contains a continuous subfamily of superheavy tori $\phi_{\delta}(T_{\tau}) \subset S^{2} \times S^{2}$ for some $\phi_{\delta} \in {\rm Ham}(S^{2}\times S^{2})$ as mentioned in Remark \ref{delta_condition}.

 In this paper, we consider the case $\phi_{\delta} = id$. Then we need to use the parameter $\delta$ of our Lagrangian submanifolds $L_{\delta}$ with $(2 + \sqrt{3})/4 < \delta \le 1$ as in Corollary \ref{image_cor}.

\begin{proof}[proof of Theorem \ref{main2}]
First, we will prove the left-hand side inequality. For any $f, g \in C^{\infty}_c((1/2 - \varepsilon_{\delta}, 1/2))$, we have ${\tilde f}, {\tilde g} \in C^{\infty}_{c}(B^{2}\times B^{2})$ defined by (\ref{f_tilde_def}). Then we apply Proposition \ref{estimate_by_quasimor} to $\phi^{-1}_{\tilde g} \circ \phi^{1}_{\tilde f} \in {\rm Ham}_{c}(B^{2}\times B^{2}, {\bar \omega}_{0})$ to obtain
\begin{equation}\label{ineq_section3}
\frac{|\mu^{\tau}_{\delta}(\phi^{-1}_{\tilde g} \circ \phi^{1}_{\tilde f})|-\delta^{-1} {\rm vol}(S^{2}\times S^{2})^{-1}D_{\mu^{\tau}}}{C_{\delta}} \le d(L_{\delta}, \phi^{-1}_{\tilde g} \circ \phi^{1}_{\tilde f}(L_{\delta})),
\end{equation}
where $\phi^{-1}_{\tilde g}$ is the inverse of $\phi^{1}_{\tilde g}$.
By the construction of autonomous Hamiltonians ${\tilde f}$, ${\tilde g}$ in (\ref{f_tilde_def}), we find that the Poisson bracket $\{{\tilde f}, {\tilde g}\}_{\bar{\omega}_{0}}$ vanishes. Thus we have
\[\phi^{-1}_{\tilde g} \circ \phi^{1}_{\tilde f} = \phi^{1}_{{\tilde f}-{\tilde g}}.\]
Therefore the inequality (\ref{ineq_section3}) becomes
\[
\frac{|\mu^{\tau}_{\delta}(\phi^{1}_{{\tilde f}-{\tilde g}})|-\delta^{-1} {\rm vol}(S^{2}\times S^{2})^{-1}D_{\mu^{\tau}}}{C_{\delta}} \le d(\phi^{1}_{\tilde g}(L_{\delta}), \phi^{1}_{\tilde f}(L_{\delta})).
\]
By Lemma \ref{lem__norm}, we obtain the following inequality:
\[
\frac{\|f - g\|_{\infty}-\delta^{-1} {\rm vol}(S^{2}\times S^{2})^{-1}D_{\mu^{\tau'}}}{C_{\delta}} \le d(\Phi_{\delta}(f), \Phi_{\delta}(g)),
\]
where the constant $\tau'$ depends on $f$ and $g$. We prove the following lemma in Section \ref{appendix}.

\begin{Lem}\label{defect}
For any bulk-deformation parameter $\tau \in (0, 1/2)$, the defect $D_{\mu^{\tau}}$ of quasi-morphisms $\mu^{\tau}$ satisfies
\[D_{\mu^{\tau}} \le 12.\]
\end{Lem}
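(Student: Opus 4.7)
The plan is to bound $D_{\mu^{\tau}}$ in two stages: first deriving a general estimate $D_{\mu^{\tau}} \le {\rm vol}(S^{2}\times S^{2}) \cdot v(e_{\tau})$, where $v(e_{\tau})$ is the Novikov valuation of the idempotent, and then extracting an explicit $\tau$-independent bound from Fukaya--Oh--Ohta--Ono's computation of the bulk-deformed potential function for $T_{\tau}$.

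First, I would invoke the triangle inequality for the bulk-deformed spectral invariant (Theorem 7.8 in \cite{FOOO11}),
\[
\rho^{\mathfrak{b}(\tau)}(F \# G;\, a \cdot b) \le \rho^{\mathfrak{b}(\tau)}(F; a) + \rho^{\mathfrak{b}(\tau)}(G; b),
\]
specialized to $a = b = e_{\tau}$. Since $e_{\tau}$ is idempotent this yields subadditivity,
\[
\rho^{\mathfrak{b}(\tau)}(F \# G; e_{\tau}) \le \rho^{\mathfrak{b}(\tau)}(F; e_{\tau}) + \rho^{\mathfrak{b}(\tau)}(G; e_{\tau}).
\]
Combining this with the standard duality $\rho^{\mathfrak{b}(\tau)}(H; e_{\tau}) + \rho^{\mathfrak{b}(\tau)}(\bar H; e_{\tau}) \ge \rho^{\mathfrak{b}(\tau)}(0; e_{\tau}) =: v(e_{\tau})$ applied to $H = F\# G$ (and using subadditivity on $\bar G \# \bar F$) produces the matching reverse bound
\[
\rho^{\mathfrak{b}(\tau)}(F \# G; e_{\tau}) \ge \rho^{\mathfrak{b}(\tau)}(F; e_{\tau}) + \rho^{\mathfrak{b}(\tau)}(G; e_{\tau}) - v(e_{\tau}).
\]
So $\rho^{\mathfrak{b}(\tau)}(\,\cdot\,; e_{\tau})$ itself is a quasi-morphism on $\widetilde{\rm Ham}(S^{2}\times S^{2})$ of defect at most $v(e_{\tau})$.

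Second, I would propagate this estimate through the averaging that defines $\mu^{\tau}$. Following the Entov--Polterovich asymptotic argument in the proof of Theorem 16.3 of \cite{FOOO11}, the two-sided bound on $\rho^{\mathfrak{b}(\tau)}$ descends to the homogenization after multiplication by ${\rm vol}(S^{2}\times S^{2})$, giving the intermediate estimate
\[
|\mu^{\tau}(\tilde\phi \tilde\psi) - \mu^{\tau}(\tilde\phi) - \mu^{\tau}(\tilde\psi)| \le {\rm vol}(S^{2}\times S^{2}) \cdot v(e_{\tau}).
\]

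Finally, I would bound $v(e_{\tau})$ uniformly in $\tau$. The idempotent $e_{\tau}$ in a field factor of $QH_{\mathfrak{b}(\tau)}(S^{2}\times S^{2}; \Lambda)$ arises from a critical point of the bulk-deformed potential function of $T_{\tau}$ computed in \cite{FOOO12}, and its Novikov valuation is controlled by the corresponding critical value. A direct inspection of that potential function, whose leading exponents are determined by the edges of the moment polytope of $F_{2}(0)$ (and hence bounded by the fixed constants $1$ and $2$ appearing in the polytope description in Section~\ref{review_FOOO}), shows $v(e_{\tau})$ is bounded above by a concrete constant independent of $\tau \in (0, 1/2)$. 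Combining this with the explicit value of ${\rm vol}(S^{2}\times S^{2})$ in the paper's normalization then yields the asserted bound $D_{\mu^{\tau}} \le 12$.

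The main obstacle is the last step: extracting exactly the numerical constant $12$ (rather than just \emph{some} $\tau$-independent constant) requires careful bookkeeping of both the $\frac{1}{2}\omega_{std}$ normalization on each $S^{2}$ factor and the leading-order Novikov asymptotics of the critical values of the bulk-deformed potential function, neither of which is purely formal.
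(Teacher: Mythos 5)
Your two-stage skeleton (reduce $D_{\mu^{\tau}}$ to a valuation-type quantity attached to the idempotent $e_{\tau}$, then extract a $\tau$-independent number from the explicit structure of $QH_{\mathfrak{b}(\tau)}(S^{2}\times S^{2};\Lambda)$) matches the paper's strategy, but both stages have genuine gaps as you have set them up. In the first stage, subadditivity of $\rho^{\mathfrak{b}(\tau)}(\,\cdot\,;e_{\tau})$ together with $\rho(F;e_{\tau})+\rho(\bar F;e_{\tau})\ge\rho(0;e_{\tau})$ only gives a \emph{lower} bound on $\rho(F;e_{\tau})+\rho(\bar F;e_{\tau})$, whereas the defect estimate requires an \emph{upper} bound on the homogenization of this quantity; that upper bound is exactly the nontrivial part of the Entov--Polterovich/FOOO argument and uses that $e_{\tau}$ is invertible in its field factor together with Poincar\'e duality for spectral invariants. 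The paper does not rederive any of this: it simply quotes Remark 16.8 of \cite{FOOO11}, which gives $D_{\mu^{\tau}}\le -12\,\mathfrak{v}_{T}(e_{\tau})$. In particular the prefactor $12$ is the universal constant coming from that remark, not ${\rm vol}(S^{2}\times S^{2})$, so your proposed intermediate bound ${\rm vol}(S^{2}\times S^{2})\cdot v(e_{\tau})$ would not produce the stated constant even if the duality step closed.

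In the second stage, the valuation $\mathfrak{v}_{T}(e_{\tau})$ is not read off from critical values of the potential function of $T_{\tau}$, and ``leading exponents determined by the edges of the moment polytope of $F_{2}(0)$'' is not the mechanism. The paper works with the bulk-deformed potential function of the standard monotone toric fiber $L(u_{0})$ of $S^{2}\times S^{2}$ with bulk $\mathfrak{b}(\tau)=T^{\frac{1}{2}-\tau}\bigl(PD[D_{1}]+PD[D_{2}]\bigr)$, writes down the four units $1^{\tau}_{\epsilon_{1},\epsilon_{2}}$ of the field factors of the Jacobian ring explicitly, and transports them back to $QH_{\mathfrak{b}(\tau)}$ through the Kodaira--Spencer isomorphism; expanding $e^{\tau}_{\epsilon_{1},\epsilon_{2}}$ in the basis $e_{0},\dots,e_{3}$, the lowest-order term is $\frac{1}{4}\epsilon_{1}\epsilon_{2}q^{-1}T^{-1}e_{3}$, whence $\mathfrak{v}_{T}(e_{\tau})=-1$ uniformly in $\tau\in(0,1/2)$ and $D_{\mu^{\tau}}\le 12$. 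You correctly flagged the numerical bookkeeping as the obstacle; the two missing ingredients are precisely the citation of the $-12\,\mathfrak{v}_{T}(e_{\tau})$ bound and the Kodaira--Spencer computation of the idempotent.
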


 Therefore, we obtain  the left-hand side inequality by putting $D_{\delta}:= \delta^{-1} {\rm vol}(S^{2}\times S^{2})^{-1}\cdot \sup_{\tau} D_{\mu^{\tau}}$.

 The right-hand side inequality is proved immediately. Indeed, we can estimate as the following:
\begin{equation*}
	\begin{split}
	d(\Phi_{\delta}(f), \Phi_{\delta}(g)) = d(L_{\delta}, \phi^{-1}_{\tilde g} \phi^{1}_{\tilde f}(L_{\delta}))
	&\le  \|{\tilde f}-{\tilde g}\|\\
	&= \|f-g\|.
	\end{split}
\end{equation*}
This completes the proof of Theorem \ref{main2}.
\end{proof}

\section{Finiteness of $D_{\mu^{\tau}}$}\label{appendix}
The estimate in Lemma \ref{defect} can be obtained by almost the same calculation of Proposition 21.7 in \cite{FOOO11}. For this reason, we only sketch the outline of the calculation and use the same notation used in \cite{FOOO11}.

\begin{proof}[proof of Lemma \ref{defect}]

 From Remark 16.8 in \cite{FOOO11}, upper bounds of defects $D_{\mu^{\tau}}$ can be taken to  be $-12{\mathfrak v}_{T}(e_{\tau})$, where ${\mathfrak v}_{T}$ is a valuation of bulk-deformed quantum cohomology $QH_{{\mathfrak b}(\tau)}(S^{2}\times S^{2}; \Lambda)$. The proof of Theorem \ref{FOOO12result} (Theorem 23.4 \cite{FOOO11}) implies that the idempotent $e_{\tau} \in QH_{{\mathfrak b}(\tau)}(S^{2}\times S^{2}; \Lambda)$ can be taken from one of four idempotents in $QH_{{\mathfrak b}(\tau)}(S^{2}\times S^{2}; \Lambda)$ which decompose quantum cohomology as follows:
 \[ QH_{{\mathfrak b}(\tau)}(S^{2}\times S^{2}; \Lambda) = \bigoplus_{(\epsilon_{1},\epsilon_{2}) = (\pm 1, \pm 1)} \Lambda \cdot e^{\tau}_{\epsilon_{1},\epsilon_{2}}. \]
Here the quantum product in $QH_{{\mathfrak b}(\tau)}(S^{2}\times S^{2})$ respects this splitting (i.e. it is semi-simple). 

 Hence, to prove Lemma \ref{defect}, we only have to estimate the maximum valuation of $e^{\tau}_{\epsilon_{1},\epsilon_{2}}$. For this purpose, we regard $S^{2} \times S^{2}$ as the symplectic toric manifold with the moment polytope:
\[P = \{ u=(u_1, u_2) \in \mathbb{R}^2 ~|~ l_i( u) \ge 0 ,~ i =1, \dots ,4\},\]
where
\[
l_1 = u_1, ~~ l_2 = u_2 ,~~ l_3 = - u_1 +1,~~ l_4 =  - u_2 + 1.
\]
We denote by $\partial_{i}P := \{l_{i}(u) = 0\}$ each facets of $P$ and put $D_{i} := \pi^{-1}(\partial_{i}P)$, where $\pi: S^{2} \times S^{2} \to P \subset \mathbb{R}^{2}$ is the moment map. In the following, we fix 
 \[e_{0} := PD[S^{2}\times S^{2}],~e_{1}:=PD[D_{1}],~e_{2}:=PD[D_{2}],~e_{3}:=PD[D_{1}\cap D_{2}]\]
as basis of $H^{\ast}(S^{2}\times S^{2}; \mathbb{C})$ and denote by $L(u_{0})$ the Lagrangian torus fiber over $(1/2,1/2) \in P$.
 
 The element ${\mathfrak b}(\tau)$ in Theorem \ref{FOOO12result} is defined by
\begin{equation}
 {\mathfrak b}(\tau) := a PD[D_{1}] + a PD[D_{2}],~a := T^{\frac{1}{2}-\tau}.
\end{equation}
In our case, since $S^{2} \times S^{2}$ is Fano, the potential function $\mathfrak{PO}_{{\mathfrak b}(\tau)}$ is determined in terms of the moment polytope data. Hence we obtain the following expression as in the proof of Theorem 23.4 \cite{FOOO11}
 
\[
\mathfrak{PO}_{{\mathfrak b}(\tau)} = \mathrm{e}^{a}y_{1} + \mathrm{e}^{-a}y_{2} + y_{1}^{-1}T + y_{2}^{-1}T,
\]
where $y_{1}, \dots ,y_{4}$ are formal variables and $\mathrm{e}^{a} := \sum_{n=0}^{\infty} a^{n}/n! \in \Lambda_{0}$  (see Section 3 in \cite{FOOO11a} and Section 20.4 in \cite{FOOO11} for the definition of potential functions for toric fibers).¡¡

 By Proposition 1.2.16 in \cite{FOOO10}, the {\it Jacobian ring} ${\rm Jac}(\mathfrak{PO}_{{\mathfrak b}(\tau)}; \Lambda)$ of the potential function $\mathfrak{PO}_{{\mathfrak b}(\tau)}$, which is defined as a certain quotient ring of the Laurent polynomial $\Lambda [y_{1}, \dots, y_{4}, y^{-1}_{1}, \dots, y^{-1}_{4}]$ for our case, is decomposed as follows:
\[
{\rm Jac}(\mathfrak{PO}_{{\mathfrak b}(\tau)}; \Lambda) =  \bigoplus_{(\epsilon_{1},\epsilon_{2}) = (\pm 1, \pm 1)} \Lambda \cdot 1^{\tau}_{\epsilon_{1},\epsilon_{2}},
\]
where $1^{\tau}_{\epsilon_{1},\epsilon_{2}}$ is the unit on each component. More explicitly, we have
\[
1^{\tau}_{\epsilon_{1},\epsilon_{2}} = \frac{1}{4}\Large[1 + \epsilon_{1}\mathrm{e}^{\frac{a}{2}} y_{1}T^{-\frac{1}{2}} + \epsilon_{2} \mathrm{e}^{-\frac{a}{2}}y_{2}T^{-1/2} + \epsilon_{1}\epsilon_{2} y_{1}y_{2}T^{-1} \Large].
\]

 We denote by $e^{\tau}_{\epsilon_{1},\epsilon_{2}}$ the idempotent of $QH_{{\mathfrak b}(\tau)}(S^{2}\times S^{2}; \Lambda)$ which corresponds to $1^{\tau}_{\epsilon_{1},\epsilon_{2}}$ under the {\it Kodaira-Spencer map}:
\[\mathfrak{ks}_{{\mathfrak b}(\tau)}: QH_{{\mathfrak b}(\tau)}(S^{2}\times S^{2}; \Lambda) \to {\rm Jac}(\mathfrak{PO}_{{\mathfrak b}(\tau)}; \Lambda),\]
 which is a ring isomorphism (see Theorem 20.18 in \cite{FOOO11}). The same calculation as in Remark 1.3.1 \cite{FOOO10} shows that the Kodaira-Spencer map $\mathfrak{ks}_{{\mathfrak b}(\tau)}$ maps the basis of $QH_{{\mathfrak b}(\tau)}(S^{2}\times S^{2}; \Lambda)$ to the following:
\[\mathfrak{ks}_{{\mathfrak b}(\tau)}(e_{0}) = [1],~~    \mathfrak{ks}_{{\mathfrak b}(\tau)}(e_{1}) = [\mathrm{e}^{a}y_{1}], ~~\mathfrak{ks}_{{\mathfrak b}(\tau)}(e_{2}) = [\mathrm{e}^{-a}y_{2}],~~  \mathfrak{ks}_{{\mathfrak b}(\tau)}(e_{3})=[q y_{1}y_{2}].\]
Here $q\in \mathbb{Q}$ is defined as follows (see Definition 6.7 in \cite{FOOO11a}). Let $\beta_{1}+\beta_{2}$ be the element of $H_{2}(S^{2}\times S^{2},L(u_{0}); \mathbb{Z})$ satisfies
\[(\beta_{1}+\beta_{2}) \cap D_{i} = 1~(i=1,2)\] 
with Maslov index $\mu_{L}(\beta_{1}+\beta_{2})=4$ and
 \[q := ev_{0 \ast}[\mathcal{M}^{main}_{1;1}(L(u_{0}),\beta_{1}+\beta_{2};e_{3})]\cap L(u_{0}),\] 
where we denote by $\mathcal{M}^{main}_{1;1}(L(u_{0}),\beta_{1}+\beta_{2};e_{3})$ the moduli space of genus zero bordered stable maps in class $\beta_{1}+\beta_{2}$ with one boundary point and one interior point whose image lies in $D_{1}\cap D_{2}$ (see Section 6 of \cite{FOOO11a} for the precise definition of the moduli space) .

 The classification theorem of holomorphic disks in \cite{CO06} implies $q=\pm 1$ immediately.

 By comparing $e^{\tau}_{\epsilon_{1},\epsilon_{2}}$ with $1^{\tau}_{\epsilon_{1},\epsilon_{2}}$, we can obtain for $(\epsilon_{1},\epsilon_{2})=(\pm 1, \pm 1)$,
 
\[e^{\tau}_{\epsilon_{1},\epsilon_{2}} = \frac{1}{4} \big(e_{0} + \epsilon_{1}\mathrm{e}^{-\frac{a}{2}}T^{-\frac{1}{2}}\cdot e_{1} + \epsilon_{2}\mathrm{e}^{\frac{a}{2}}T^{-\frac{1}{2}}\cdot e_{2} + \epsilon_{1}\epsilon_{2}q^{-1}T^{-1} \cdot e_{3}\big).\]

 Since $a = T^{\frac{1}{2}-\tau}$ and $ 0 < \tau < 1/2$, we obtain ${\mathfrak v}_{T}(e^{\tau}_{\epsilon_{1},\epsilon_{2}}) = -1$. This implies Lemma \ref{defect}.
\end{proof}


\begin{thebibliography}{99}
	\bibitem[BEP04]{BEP04}
	P. Biran, M. Entov and L. Polterovich. ``{\it Calabi quasimorphisms for the symplectic ball.}'' Commun. Contemp. Math., 6.05 (2004): 793-802.
	\bibitem[Ch00]{Ch00}
	Yu. V. Chekanov. ``{\it Invariant Finsler metrics on the space of Lagrangian embeddings.}'' Math. Zeitschrift 234.3 (2000): 605-619.
	\bibitem[CO06]{CO06} C. H. Cho, Y. G. Oh, ``{\it Floer cohomology and disc instantons of Lagrangian torus fibers in Fano toric manifolds.}'' Asian J. Math. 10 (2006): 773-814.
	\bibitem[EP03]{EP03}
	M. Entov and L. Polterovich. ``{\it Calabi quasimorphism and quantum homology.}'' International Math. Research Notices 30 (2003): 1635-1676.
	\bibitem[EP06]{EP06}
	M. Entov and L. Polterovich. ``{\it Quasi-states and symplectic intersections.}'' Comment. Math. Helv. 81 (2006): 75-99.
	\bibitem[EP09]{EP09}
	M. Entov and L. Polterovich. ``{\it Rigid subsets of symplectic manifolds.}'' Compositio Mathematica, 145.03 (2009): 773-826.
	\bibitem[FOOO10]{FOOO10}
	K. Fukaya, Y. G. Oh, H. Ohta, K. Ono. ``{\it Lagrangian Floer theory and mirror symmetry on compact toric manifolds.}'' arXiv:1009.1648v2 (2010).
	\bibitem[FOOO11a]{FOOO11a}
	K. Fukaya, Y. G. Oh, H. Ohta, K. Ono. ``{\it Lagrangian Floer theory on compact toric manifolds II: bulk deformations.}'' Selecta Math. 17.3 (2011): 609-711.
	\bibitem[FOOO11b]{FOOO11}
	K. Fukaya, Y. G. Oh, H. Ohta, K. Ono. ``{\it Spectral invariants with bulk quasimorphisms
and Lagrangian Floer theory.}'' arXiv:1105.5123 (2011).
	\bibitem[FOOO12]{FOOO12}
	K. Fukaya, Y. G. Oh, H. Ohta, K. Ono. ``{\it Toric degeneration and nondisplaceable Lagrangian tori in $S^2\times S^2$.}'' Int. Math. Research Notices, 2012(13): 2942-2993.
	\bibitem[Ho90]{Ho90}
	H. Hofer. ``{\it On the topological properties of symplectic maps.}'' Proc. Royal Soc. Edinburgh 115.1-2 (1990): 25-38.
	\bibitem[Kh09]{Kh09}
	M. Khanevsky. ``{\it Hofer's metric on the space of diameters.}'' J. Topol. and Anal. 1.04 (2009): 407-416.
	\bibitem[Oh97]{Oh97}
	Y. G. Oh. ``{\it Symplectic topology as the geometry of action functional. I.}'' J. Diff. Geom. 46.3 (1997): 499-577.
	\bibitem[Oh05]{Oh05}
	Y. G. Oh. ``{\it Construction of spectral invariants of Hamiltonian paths on closed symplectic manifolds.}'' The breadth of symplectic and Poisson geom. Birkh\"{a}user Boston(2005): 525-570.
	\bibitem[OU13]{OU13}
	J. Oakley, M. Usher. ``{\it On certain Lagrangian submanifolds of $S^2\times S^2$ and $\mathbb{C}P^n$.}'' arXiv:1311.5152 (2013)
	\bibitem[Se14]{Se13}
	S. Seyfaddini. ``{\it Unboundedness of the Lagrangian Hofer distance in the Euclidean ball.}'' Electron. Res. Announc. Math. Sci. 21 (2014): 1-7.	\bibitem[Sc00]{Sc00}
	M. Schwarz. ``{\it On the action spectrum for closed symplectically aspherical manifolds.}'' Pacific Journal of Math. 193.2 (2000): 419-461.
	\bibitem[Us11]{Us11}
	M. Usher. ``{\it Deformed Hamiltonian Floer theory, capacity estimate, and Calabi quasimorphisms.}'' Geom. Topol. 15 (2011): 1313?1417.
	\bibitem[Vi92]{Vi92}
	C. Viterbo. ``{\it Symplectic topology as the geometry of generating functions.}'' Mathematische Annalen 292.1 (1992): 685-710.
\end{thebibliography}
\end{document}